\theoremstyle{plain} \newtheorem{thm}{Theorem}[section]
\newtheorem{lemma}[thm]{Lemma} 
 \newtheorem{prop}[thm]{Proposition}
\theoremstyle{definition}
\numberwithin{equation}{section} \theoremstyle{remark}
\newtheorem{rem}{Remark}[section]
\newcommand{\dsp}{\displaystyle}
\newcommand\Q{\mathbb Q}\newcommand\R{\mathbb R}\newcommand\ff{\frac}
\newcommand\EE{\mathbb E}\newcommand\PP{\mathbb P}
\renewcommand\P{\mathbb P}
\newcommand\dd{\delta}\newcommand\DD{\Delta}
\newcommand\vv{\varepsilon}\newcommand\rr{\rho}
\renewcommand\gg{\gamma}
\newcommand\nn{\nabla}\newcommand\pp{\partial}
\renewcommand\d{\text{\rm{d}}} \newcommand\bb{\beta} \renewcommand\aa{\alpha}
 \newcommand\si{\sigma} \newcommand\F{\scr F}
\newcommand\Ric{\text{\rm{Ric}}} \newcommand\Hess{\text{\rm{Hess}}}
\newcommand\e{\text{\rm{e}}}
\newcommand\ee{\varepsilon}
\newcommand\newdot{{\kern.8pt\cdot\kern.8pt}}
\newcommand\tbull{{\hbox{$\displaystyle .$}}}
\newcommand{\scr}[1]{\mathscr #1}
\begin{document}

\title[Gradient Estimate and Harnack Inequality]{Gradient Estimate and Harnack Inequality on
     Non-Compact Riemannian Manifolds}

\thanks{Supported in part by
     NNSFC (10121101), RFDP (20040027009) and the  973-Project in China.}

\author[M. Arnaudon]{Marc Arnaudon} \address{D\'epartement de
  math\'ematiques\hfill\break\indent Universit\'e de Poitiers, T\'el\'eport 2
  - BP 30179\hfill\break\indent F--86962 Futuroscope Chasseneuil Cedex,
  France} \email{arnaudon@math.univ-poitiers.fr}

\author[A. Thalmaier]{Anton Thalmaier} \address{Institute of Mathematics,
  University of Luxembourg\hfill\break\indent 162A, avenue de la
  Fa\"{\i}encerie\hfill\break\indent L--1511 Luxembourg, Grand-Duchy of
  Luxembourg} \email{anton.thalmaier@uni.lu}

\author[F.-Y. Wang]{Feng-Yu Wang$^*$} \address{ 
               School of Mathematics, Beijing Normal University, Beijing 100875, China}
               \curraddr{WIMCS, Department of Mathematics, University of
               Wales Swansea\hfill\break\indent  
               Singleton Park, Swansea, SA2 8 PP, UK}
\thanks {$^*$ Corresponding author}
\email{wangfy@bnu.edu.cn}

\keywords{Harnack inequality, heat equation, gradient estimate, diffusion semigroup}

\subjclass{58J65 58J35 60H30}

\maketitle

\begin{abstract}
A new type of gradient estimate is established for diffusion
semigroups on non-compact complete Riemannian manifolds. As
applications, a global Harnack inequality with power and a heat
kernel estimate are derived for diffusion semigroups on arbitrary
complete Riemannian manifolds.
\end{abstract}

\section{The main result}
\label{sec:1}

Let $M$ be a  non-compact complete Riemannian manifold, and $P_t$
be the Dirichlet diffusion semigroup generated by $L= \DD+\nn V$
for some $C^2$ function $V$. We intend to establish reasonable
gradient estimates and Harnack type inequalities for~$P_t$.
In case that $\Ric-\Hess_V$ is bounded below, a dimension-free Harnack
inequality was established in \cite{W97}, which according to \cite{W04},
is indeed equivalent to the corresponding curvature condition.
See e.g.~\cite{B} for equivalent statements on heat kernel
functional inequalities; see also \cite{LY,BQ,Li} for a parabolic
Harnack inequality using the dimension-curvature condition by
shifting time, which goes back to the classical local parabolic
Harnack inequality of Moser \cite{M}.

Recently, some sharp gradient estimates have been derived in
\cite{SZ,Zhang} for the Dirichlet semigroup on relatively compact
domains. More precisely, for $V=0$ and a relatively compact
open $C^2$ domain $D$, the Dirichlet heat semigroup $P_t^D$ satisfies
\begin{equation}\label{1.1}
|\nn P_t^Df|(x)\le C(x,t)\,P_t^D f(x),\quad
 x\in D,\ t>0,
\end{equation}
for some locally bounded function $C\colon D\times
{]0,\infty[}\to{]0,\infty[}$ and all $f\in \scr B_b^+$, the space of
bounded non-negative measurable functions on $M$.
Obviously, this implies the Harnack inequality
\begin{equation}\label{1.2}
P_t^Df(x)\le \tilde C(x,y,t)\, P_t^D f(y),\quad
t>0,\ x,y\in D,\ f\in \scr B_b^+,
\end{equation}
for some function $\tilde C\colon M^2\times{]0,\infty[}\to {]0,\infty[}$.
The purpose of this paper is to establish inequalities analogous to
(\ref{1.1}) and (\ref{1.2}) globally on the whole manifold~$M$.

On the other hand however, both (\ref{1.1}) and (\ref{1.2}) are
in general wrong for $P_t$ in place of $P_t^D$.
A simple counter-example is already the standard heat semigroup on~$\R^d$.
Hence, we turn to search for the following slightly weaker version of gradient
estimate:
\begin{align}\label{1.3a}
|\nn P_t f(x)|\le \dd \big(P_t f\log f-
P_t f\log P_t f\big)(x)+ \ff{C(\dd, x)}{t\land 1}\,P_t f(x)&,\\
 x\in M,\ t>0,\ \dd>0,\
f\in\scr B_b^+&,\notag
\end{align}
for some positive function $C\colon
{]0,\infty[}\times M\to {]0,\infty[}$.
This kind of gradient estimate is
new and, in particular, implies the Harnack inequality with power
introduced in \cite{W97} (see Theorem \ref{T1.2} below).

\begin{thm}\label{T1.1} There exists a continuous positive function
$F$ on $]0,1]\times M$ such that
\begin{align}\label{1.3}
 |\nn P_t f(x)|&\le \dd \big(P_t f\log f-
P_t f\log P_t f\big)(x)\notag\\
&+\left(F(\dd\wedge 1,x) \left(\ff {1} {\dd (t\wedge 1)} +1\right)+\ff{2\dd}{\e}\right)P_t f(x),\\
&\dd>0, \ x\in M,\ t>0,\ f\in \scr B_b^+.\notag
\end{align}
\end{thm}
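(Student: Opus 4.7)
The approach is to derive a Bismut--Elworthy--Li type integration by parts formula for $\nn P_t f$ and then convert it into the log-gradient estimate via the entropy duality inequality. Let $X_\bullet$ be the $L$-diffusion starting at $x$, let $//_\bullet$ denote stochastic parallel transport along $X_\bullet$, and let $Q_\bullet\in\mathrm{End}(T_xM)$ be the multiplicative functional given pathwise by $\dot Q_s=-\tfrac12\bigl({//}_s^{-1}(\Ric-\Hess_V)(X_s){//}_s\bigr)Q_s$ with $Q_0=\mathrm{Id}$. Setting $s_0:=t\wedge 1$ and choosing a deterministic $h\in C^1([0,s_0])$ with $h(0)=1$ and $h(s_0)=0$, I would aim at the identity
\[
\nn_vP_tf(x)=\EE\bigl[f(X_t)N_{s_0}\bigr],\qquad N_{s_0}:=\int_0^{s_0}\bigl\langle \dot h_sQ_sv,\,dB_s\bigr\rangle,
\]
for unit $v\in T_xM$ and $f\in\scr B_b^+$, where $B_\bullet$ is the anti-development of $X_\bullet$. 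Since $M$ is non-compact with possibly unbounded Ricci curvature, this identity is first obtained on an exhaustion $D_n\uparrow M$ by relatively compact sets (where the Bismut argument is standard) and then passed to the limit using that $h$ is cut off at time $s_0\le 1$ and that $\|Q_s\|$ admits local $L^2$-bounds from the lower curvature bound on each $D_n$.

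The second step is the entropy duality inequality
\[
\EE[YZ]\le\dd\bigl(\EE[Z\log Z]-\EE[Z]\log\EE[Z]\bigr)+\dd\,\EE[Z]\log\EE\bigl[e^{Y/\dd}\bigr],
\]
applied with $Y=\pm N_{s_0}$ (with $v$ aligned with $\nn P_tf(x)$) and $Z=f(X_t)$, which yields
\[
|\nn P_tf|(x)\le\dd\bigl(P_t(f\log f)-(P_tf)\log P_tf\bigr)(x)+\dd\,(P_tf)(x)\log\EE\bigl[e^{|N_{s_0}|/\dd}\bigr].
\]
Writing $e^{N_{s_0}/\dd}=\exp\bigl(N_{s_0}/\dd-\langle N\rangle_{s_0}/(2\dd^2)\bigr)\cdot\exp\bigl(\langle N\rangle_{s_0}/(2\dd^2)\bigr)$ and applying Cauchy--Schwarz, the last logarithm is bounded by $\log 2+\tfrac12\log\EE\bigl[\exp(\langle N\rangle_{s_0}/\dd^2)\bigr]$, so only the Khasminskii--type exponential functional
\[
\EE\!\left[\exp\!\left(\tfrac{1}{\dd^2}\int_0^{s_0}\dot h_s^2\,\|Q_s\|^2\,ds\right)\right]
\]
remains to be controlled.

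The main obstacle is to estimate this expectation locally in $x$, uniformly for $s_0\in{]0,1]}$, without a global lower bound on $\Ric-\Hess_V$. Take $h(s)=1-s/s_0$ and use the pointwise comparison $\|Q_s\|\le\exp\!\bigl(\tfrac12\!\int_0^sk^-(X_u)\,du\bigr)$, where $k$ is a locally bounded lower bound of $\Ric-\Hess_V$. I would then localize by stopping at the first exit $\tau_r$ of $X$ from a geodesic ball $B(x,r)$: on $\{\tau_r\ge s_0\}$ the functional is controlled by the uniform bound for $k^-$ on $B(x,r)$, whereas on $\{\tau_r<s_0\}$ one uses an exit-time (Gaussian-type) tail estimate for $L$-diffusions on the time scale $s_0\le 1$ combined with a crude bound for $\|Q_s\|$. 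Optimizing $r=r(x,\dd)$ absorbs the resulting constants into a locally bounded function $F(\dd\wedge 1,x)$ and produces a prefactor of the form $F(\dd\wedge 1,x)\bigl(1/(\dd s_0)+1\bigr)$; the additive $2\dd/\e$ summand appears through the elementary optimization $\sup_{u>0}u\log(1/u)=1/\e$ used to absorb the residual multiplicative constant sitting in front of the exponential-moment logarithm.

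Finally, the case $t>1$ is reduced to $t=1$ via the semigroup identity $P_tf=P_1(P_{t-1}f)$: apply the $s_0=1$ version of the estimate to $P_{t-1}f\in\scr B_b^+$, and use Jensen's inequality for the convex function $u\mapsto u\log u$ to bound $P_1\bigl((P_{t-1}f)\log P_{t-1}f\bigr)\le P_t(f\log f)$, so that the entropy produced at time $1$ is dominated by the entropy at time $t$ appearing in \eqref{1.3}. Combining the short-time and long-time regimes yields the claim.
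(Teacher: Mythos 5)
The proposal takes a genuinely different route from the paper, but it contains a gap that I do not see how to close under the stated hypotheses.

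The paper does \emph{not} attempt a global Bismut--Elworthy--Li formula for $P_t$. Instead it decomposes $P_t f(x)=P_t^D f(x)+\int h_x(s,z)P_{t-s}f(z)\,\d s\,\nu(\d z)$ via Lemma~\ref{L2.2}, so that the gradient at $x$ falls only on two \emph{local} objects: the Dirichlet semigroup $P_t^D$ (handled by coupling and Girsanov in Proposition~\ref{P3.1}, after modifying the metric and the drift outside $D$ so that $\Ric-\nn Z$ becomes globally bounded below without changing $P_t^D$) and the Dirichlet heat kernel $p_{s/2}^D(x,\cdot)$ (handled by the Souplet--Zhang type bound of Proposition~\ref{Pr2.5}). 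All exponential moments that then arise in the entropy-duality step involve only the exit time $\tau(x)$ of the relatively compact domain $D$, which Lemma~\ref{L2.3} controls by a Gaussian tail independent of the global geometry. In short, the localization happens at the level of the decomposition of $P_t$, before any stochastic or analytic estimate is invoked.

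Your proposal instead carries the damped multiplicative functional $Q_s$ for the \emph{full} process $X$ on $[0,s_0]$ and needs to bound $\EE\bigl[\exp\bigl(\dd^{-2}\int_0^{s_0}\dot h_s^2\|Q_s\|^2\,\d s\bigr)\bigr]$. That quantity involves the curvature of $M$ along the whole random trajectory, and the only thing you have against it is the pointwise comparison $\|Q_s\|\le\exp\bigl(\tfrac12\int_0^s k^-(X_u)\,\d u\bigr)$ with $k$ merely \emph{locally} bounded. On the event $\{\tau_r<s_0\}$ you propose to pair the Gaussian tail of $\tau_r$ with ``a crude bound for $\|Q_s\|$'', but no such bound exists: after $X$ leaves $B(x,r)$ it may travel into regions where $k^-$ is arbitrarily large, and neither $\|Q_s\|$ nor even $\EE\|Q_s\|^2$ is controlled. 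Iterating over an exhaustion $B(x,r_n)$ does not help either: the Gaussian decay of $\PP\{\tau_{r_n}<s_0\}$ competes with the growth of $\sup_{B(x,r_{n+1})}k^-$, and Theorem~\ref{T1.1} is asserted for an \emph{arbitrary} complete manifold, where that growth can be made to win. For the same reason the exhaustion argument you invoke to establish the Bismut identity itself is not convergent: the uniform integrability of $f(X_t)N_{s_0}^{(n)}$ as $D_n\uparrow M$ again requires control on $\|Q_s\|$. This is precisely the obstruction the paper circumvents by never taking the gradient of $P_t$ directly.

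On the positive side, the entropy-duality step, the Cauchy--Schwarz splitting of $e^{|N|/\dd}$, and the reduction of the case $t>1$ to $t=1$ via the semigroup law and Jensen for $u\mapsto u\log u$ all match what the paper does (the last of these verbatim, at the end of the proof of Proposition~\ref{P3.1}). If one were willing to assume, say, a two-sided quadratic growth condition on $\Ric-\Hess_V$, your Khasminskii-style estimate could likely be pushed through and would yield a shorter proof; but without such a hypothesis the key exponential moment is simply not finite in general, so the argument as written does not prove the theorem.
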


\begin{thm}\label{T1.2} There exists a positive function $C\in
C(]1,\infty[\times M^2)$ such that
\begin{align*}
(P_t f(x))^\aa\le (P_t f^\aa(y)) \exp \left\{\ff{2(\aa-1)}{\e}+\aa C(\aa,x,y)\left(\ff{
\aa\rr^2(x,y)}{(\aa-1)(t\wedge 1)}+\rr(x,y)\right)\right\}&,\\
\aa>1,\ t>0,\ x,y\in M,\ f\in \scr B_b^+&,
\end{align*}
where $\rr$ is the Riemannian distance on $M$.
Consequently, for any $\dd>2$ there exists a positive function
$C_\dd\in C({[0,\infty[}\times M)$ such that the transition density
$p_t(x,y)$ of $P_t$ with respect to $\mu(\d x):=\e^{V(x)}\d x$,
where $\d x$ is the volume measure, satisfies
$$p_t(x,y)\le\ff{\exp\left\{-{\rr(x,y)^2}/(2\dd
t)+C_\dd(t,x)+C_\dd(t,y)\right\}}{\sqrt{\mu(B(x,\sqrt {2t}))\mu(B(y,\sqrt
{2t}))}},\quad x,y\in M,\ t\in {]0,1[}\,.$$
\end{thm}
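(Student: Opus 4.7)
The plan is to prove the two assertions of Theorem \ref{T1.2} in sequence. The power Harnack inequality follows from Theorem \ref{T1.1} by the standard Wang interpolation-along-a-geodesic technique; the Gaussian heat kernel estimate then follows from the power Harnack through an $L^2\to L^\infty$ bound combined with Cauchy--Schwarz and a Davies-type perturbation.

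For the power Harnack, fix $\alpha>1$, $t>0$, $x,y\in M$, and let $\gamma\colon[0,1]\to M$ be a minimizing geodesic from $y$ to $x$ with $|\dot\gamma|\equiv\rho(x,y)$. Choose the affine interpolation $\beta(s):=\alpha-(\alpha-1)s$, so that $\beta(0)=\alpha$ and $\beta(1)=1$, and consider $u(s):=\log P_tf^{\beta(s)}(\gamma(s))$ (for $f>0$; the general case follows by truncation). Differentiating yields
\begin{equation*}
u'(s) = \frac{\beta'(s)}{\beta(s)}\cdot\frac{P_t(f^{\beta(s)}\log f^{\beta(s)})(\gamma(s))}{P_tf^{\beta(s)}(\gamma(s))} + \frac{\langle\nabla P_tf^{\beta(s)},\dot\gamma(s)\rangle}{P_tf^{\beta(s)}(\gamma(s))},
\end{equation*}
and I then apply Theorem \ref{T1.1} to the second term with the $s$-dependent parameter $\delta(s):=(\alpha-1)/(\beta(s)\rho(x,y))$, tuned so that the entropy-like contribution cancels. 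This reduces the problem to the linear first-order ODE inequality $u'(s)\le (\beta'(s)/\beta(s))u(s)+\rho(x,y)G(s)$ with $G(s)$ collecting the remaining terms from Theorem \ref{T1.1}. Multiplying by the integrating factor $1/\beta(s)$ and integrating on $[0,1]$ produces $\alpha\log P_tf(x)-\log P_tf^\alpha(y)\le\alpha\rho(x,y)\int_0^1 G(s)/\beta(s)\,ds$; exponentiation then yields the first assertion, with $C(\alpha,x,y)$ given by the local supremum of $F$ along $\gamma([0,1])$ plus controlled constants.

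For the heat kernel bound I would proceed in two steps. First, rewrite the Harnack as $(P_tf(x))^\alpha e^{-\Phi(\alpha,t,x,z)}\le P_tf^\alpha(z)$, integrate in $z$ over $M$, and use the $\mu$-symmetry and sub-Markovianity of $P_t$ to get $(P_tf(x))^\alpha\int_M e^{-\Phi(\alpha,t,x,z)}\,d\mu(z)\le\|f\|_\alpha^\alpha$. Bounding the integral below by $e^{-\bar\Phi(\alpha,t,x)}\mu(B(x,\sqrt{2t}))$, taking $\alpha=2$, and invoking the Riesz representation $\|p_t(x,\cdot)\|_2=\sqrt{p_{2t}(x,x)}$ gives the on-diagonal estimate $p_{2t}(x,x)\le e^{\bar\Phi(2,t,x)}/\mu(B(x,\sqrt{2t}))$. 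Cauchy--Schwarz applied to the semigroup identity $p_{2t}(x,y)=\int p_t(x,z)p_t(z,y)\,d\mu(z)$ then yields $p_{2t}(x,y)\le\sqrt{p_{2t}(x,x)p_{2t}(y,y)}$, which matches the target denominator $\sqrt{\mu(B(x,\sqrt{2t}))\mu(B(y,\sqrt{2t}))}$ but still lacks the Gaussian factor. To insert that factor, I would apply Davies' perturbation: conjugate the semigroup by $e^{\lambda\psi}$ for a bounded $1$-Lipschitz function $\psi$, rerun the $L^2\to L^\infty$ argument on $P_t^{\lambda\psi}$ (with the commutator controlled via Theorem \ref{T1.1}), and optimize over $\lambda$ with $\psi=\rho(\cdot,x)$ suitably truncated, producing $\exp(-\rho(x,y)^2/(2\delta t))$ for any $\delta>2$.

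The main obstacle is this last step. Since the Harnack exponent $\Phi(\alpha,t,x,y)$ contains a \emph{positive} quadratic $\rho^2$ term, the negative Gaussian factor must come from a subtle balance, either by a Davies-type perturbation (requiring a careful reading of Theorem \ref{T1.1} under the twist by $e^{\lambda\psi}$) or by an iteration of the Harnack with $\alpha$ close to $1$ combined with an auxiliary optimization along the geodesic from $x$ to $y$. Tracking the precise threshold $\delta>2$ (rather than a larger constant) and assembling the remaining $x,y,t$-dependence into the stated form $C_\delta(t,x)+C_\delta(t,y)$ with $C_\delta\in C([0,\infty[\times M)$ is the technical heart of the argument.
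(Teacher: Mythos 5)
Your derivation of the power Harnack inequality is essentially the paper's argument: you interpolate $\log P_t f^{\beta(s)}(\gamma(s))$ along the geodesic with an affine exponent $\beta$, differentiate, apply Theorem~\ref{T1.1} to dominate the gradient term by the entropy-like term, and integrate the resulting differential inequality. The paper parametrizes the other way ($\beta(s)=1+s(\alpha-1)$, geodesic from $x$ to $y$) and uses the fixed choice $\delta=(\alpha-1)/(\alpha\rho(x,y))$ rather than an $s$-dependent $\delta(s)=(\alpha-1)/(\beta(s)\rho(x,y))$, discarding part of the entropy term via $\beta(s)\le\alpha$; your $s$-dependent choice is equally valid. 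Also note the paper works with $\log(P_tf^{\beta(s)})^{\alpha/\beta(s)}$ rather than $\log P_tf^{\beta(s)}$, which absorbs the extra $u(s)$ term and avoids the integrating factor you introduce, but this is a cosmetic difference.

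For the heat kernel bound, however, you take a genuinely different route and it has a real gap. Your plan is: on-diagonal estimate from integrating the Harnack, Cauchy--Schwarz to pass off-diagonal, then a Davies-type conjugation by $e^{\lambda\psi}$ to insert the Gaussian factor. The Davies step is not justified here and cannot be done by ``rerunning'' Theorem~\ref{T1.1}: the twisted kernel $e^{-\lambda\psi(x)}p_t(x,y)e^{\lambda\psi(y)}$ is the kernel of a perturbed operator, and the gradient estimate of Theorem~\ref{T1.1} was proved only for the Dirichlet diffusion semigroup of $L=\Delta+\nabla V$; one would have to redevelop all of Sections~2--4 for the perturbed operator, which is a substantial undertaking you have not carried out. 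The paper avoids this entirely. It only uses the Harnack for $y\in B(x,\sqrt{2t})$ (so $\rho^2/t$ is bounded there), multiplies by an explicit Gaussian weight $\exp\{-\rho(x,y)^2/(2(T-qt))\}$ before integrating, obtains a bound on the weighted $L^2$-quantity $E_\delta(x,t)=\int_M p_t(x,y)^2\exp\{\rho(x,y)^2/(\delta t)\}\,\mu(dy)$ following \cite{ATW}, and then invokes Grigor'yan's lemma \cite[Eq.~(3.4)]{G}, which converts precisely such a weighted $L^2$ bound into the pointwise Gaussian upper estimate with the volume factors $\mu(B(\cdot,\sqrt{2t}))$. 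That lemma is the ingredient that produces the $\exp\{-\rho(x,y)^2/(2\delta t)\}$ decay for you, and it is what your sketch is missing. Also, your fixed choice $\alpha=2$ would not give an arbitrary $\delta>2$; the paper takes $\alpha=2\delta/(2+\delta)\in{]1,2[}$ depending on $\delta$ so that $p=2/\alpha$ has the right conjugate exponent, and it is this freedom that lets $\delta$ approach the optimal constant $2$.
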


\begin{rem}
According to the Varadhan asymptotic formula for short time
behavior, one has
$\lim_{t\to 0} 4 t \log p_t(x,y)= -\rr(x,y)^2,\ x\ne y$.
Hence, the above heat kernel upper bound is sharp for short time.
\end{rem}

The paper is organized  as follows:
In Section~\ref{sec:2} we provide a formula expressing
$P_t$ in terms of $P_t^D$ and the joint distribution of $(\tau,
X_{\tau})$, where $X_t$ is the $L$-diffusion process and $\tau$ its
hitting time to $\pp D$.
Some necessary lemmas and technical results are collected.
Proposition \ref{Pr2.5} is a refinement of
a result in \cite{Zhang} to make the coefficient of $\rr(x,y)/t$
sharp and explicit.
In Section \ref{sec:3}
we use parallel coupling of diffusions together with Girsanov
transformation to obtain a gradient estimate for Dirichlet heat
semigroup. Finally, complete proofs of Theorems \ref{T1.1} and
\ref{T1.2} are presented in Section~\ref{sec:4}.

To prove the indicated theorems, besides stochastic arguments, we
make use of a local gradient estimate obtained in \cite{SZ} for
$V=0$. For the convenience of the reader, we include a brief proof
for the case with drift in the Appendix.

\section{Some Preparations}
\label{sec:2}

Let $X_s(x)$ be an $L$-diffusion process with starting point $x$ and
explosion time $\xi(x)$. For any open $C^2$ domain $D\subset M$ such that
$x\in D$, let $\tau(x)$ be the first hitting time of $X_s(x)$ at
the boundary $\pp D$.
We have
$$P_t f(x)= \mathbb E \left[f(X_t(x))\,1_{\{t<\xi(x)\}}\right],\quad P_t^D f(x)=
\mathbb E \left[f(X_t(x))\,1_{\{t<\tau(x)\}}\right].
$$ Let $p_t^D(x,y)$ be the
transition density of $P_t^D$ with respect to $\mu$.

We first provide a formula for the density $h_x(t,z)$ of $(\tau(x),
X_{\tau(x)}(x))$ with respect to $\d t\otimes\nu(\d z)$, where $\nu$
is the measure on $\pp D$ induced by $\mu(\d y):= \e^{V(y)}\d y$.

\begin{lemma} \label{L2.1} Let $K(z,x)$ be the Poisson kernel in
$D$ with respect to $\nu$. Then
\begin{equation}
\label{E13} h_x(t,z)=\int_D\left(-\pp_t
p^D_t(x,y)\right)K(z,y)\,\mu(\d y).
\end{equation} Consequently, the density $s\mapsto\ell_x(s)$ of $\tau(x)$ satisfies
the equation:
\begin{equation}
\label{E19} \ell_x(s)=\int_D\left(-\pp_t p^D_t(x,y)\right)\,\mu(\d
y).
\end{equation}
\end{lemma}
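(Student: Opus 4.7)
The plan is to derive~(\ref{E13}) from the strong Markov property combined with the probabilistic interpretation of the Poisson kernel, and then to obtain~(\ref{E19}) by integrating out the boundary variable.

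First, for any bounded measurable $g$ on $\pp D$, I would set $u_g(y):=\int_{\pp D} g(z)\,K(z,y)\,\nu(\d z)$, the bounded $L$-harmonic extension of $g$ to $D$. Since $D$ is a relatively compact $C^2$ domain, every boundary point is regular for the $L$-diffusion, so that $\tau(y)<\infty$ almost surely and the stochastic representation
\begin{equation*}
u_g(y)=\mathbb E\bigl[g(X_{\tau(y)}(y))\bigr],\qquad y\in D,
\end{equation*}
holds.

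Next, by the strong Markov property applied at time $t$,
\begin{equation*}
\mathbb E\bigl[g(X_{\tau(x)}(x))\,1_{\{t<\tau(x)\}}\bigr]=\mathbb E\bigl[u_g(X_t(x))\,1_{\{t<\tau(x)\}}\bigr]=P_t^D u_g(x),
\end{equation*}
and subtracting this from $u_g(x)=\mathbb E[g(X_{\tau(x)}(x))]$ yields, via the fundamental theorem of calculus together with $\lim_{s\downarrow 0} P_s^D u_g(x)=u_g(x)$,
\begin{equation*}
\mathbb E\bigl[g(X_{\tau(x)}(x))\,1_{\{\tau(x)\le t\}}\bigr]=u_g(x)-P_t^D u_g(x)=-\int_0^t \pp_s P_s^D u_g(x)\,\d s.
\end{equation*}
Expanding $P_s^D u_g(x)=\int_D p_s^D(x,y)\,u_g(y)\,\mu(\d y)$, substituting the definition of $u_g$, and interchanging the order of integration by Fubini, the right-hand side becomes
\begin{equation*}
\int_0^t\!\!\int_{\pp D} g(z)\left[\int_D\bigl(-\pp_s p_s^D(x,y)\bigr)\,K(z,y)\,\mu(\d y)\right]\nu(\d z)\,\d s.
\end{equation*}
Since $g$ is arbitrary, the bracketed expression must coincide with $h_x(s,z)$, which is~(\ref{E13}). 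Formula~(\ref{E19}) is then immediate by choosing $g\equiv 1$, since $u_g\equiv 1$ and $\int_{\pp D} K(z,y)\,\nu(\d z)=1$; equivalently, one integrates~(\ref{E13}) against $\nu$ over $\pp D$.

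The main technical point will be to justify the differentiation $\pp_s\int_D p_s^D(x,y)\,u_g(y)\,\mu(\d y)=\int_D \pp_s p_s^D(x,y)\,u_g(y)\,\mu(\d y)$ and the applications of Fubini. Both rely on standard parabolic regularity for $p_s^D$ on the relatively compact $C^2$ domain $D$, together with the boundedness of $u_g$ and the Poisson kernel $K(z,\cdot)$; no deeper difficulty is anticipated.
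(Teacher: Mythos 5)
Your proof is correct and follows essentially the same route as the paper's. The paper starts from the optional-stopping identity $h(x)=\EE[h(X_{t\wedge\tau(x)})]$ for the harmonic extension $h$, decomposes on $\{t<\tau(x)\}$ versus $\{\tau(x)\le t\}$, expands $h$ via the Poisson kernel to obtain the integral identity $K(z,x)=\int_D p_t^D(x,y)K(z,y)\,\mu(\d y)+\int_0^t h_x(s,z)\,\d s$, and then differentiates in $t$, using boundedness of $\pp_t p_t^D$ on $[\vv,\vv^{-1}]\times\bar D\times\bar D$ to justify passing the derivative under the integral. Your version phrases the same decomposition through the strong Markov property at time $t$ (which is equivalent to the optional-stopping identity for the harmonic extension $u_g$) and replaces the final differentiation by the fundamental theorem of calculus applied to $s\mapsto P_s^D u_g(x)$; these are cosmetic rearrangements of the identical argument, and the regularity needed to justify the $s$-differentiation and Fubini is the same in both cases. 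The derivation of \eqref{E19} by taking $g\equiv1$ (equivalently integrating \eqref{E13} against $\nu$) matches the paper exactly.
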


\begin{proof} Every bounded continuous function $f\colon\partial D\to\R$
extends continuously to a function $h$ on $\bar D$ which is harmonic
in $D$ and represented by
$$
h(x)=\int_{\partial D}K(z,x)f(z)\,\nu(\d z).
$$
Recall that $z\mapsto K(z,x)$ is the density of $X_{\tau(x)}(x)$.
Hence
$$
\EE[f(X_{\tau(x)}(x))]=h(x)=\int_{\partial D}K(z,x)f(z)\,\nu(\d z).
$$
On the other hand, the identity
$$
h(x)=\EE[h(X_{t\wedge \tau(x)})]
$$
yields
\begin{align*}
h(x)&=\int_Dp^D_t(x,y)h(y)\,\mu(\d y)+\int_{\partial D}\nu(\d
z)\int_0^th_x(s,z)f(z)\d s
\\
&=\int_Dp^D_t(x,y)\left(\int_{\partial D}K(z,y)f(z)\nu(\d
z)\right)\,\mu(\d y)+\int_{\partial D}\nu(\d z)\int_0^t
h_x(s,z)f(z)\d s
\\
&=\int_{\partial D}f(z)\left(\int_Dp^D_t(x,y)K(z,y)\,\mu(\d
y)+\int_0^t h_x(s,z)\d s\right)\nu(\d z),
\end{align*}
which implies that
\begin{equation}
\label{E12} K(z,x)=\int_Dp^D_t(x,y)K(z,y)\,\mu(\d y)+\int_0^t
h_x(s,z)\d s.
\end{equation}
Differentiating with respect to $t$ gives
\begin{equation}
\label{E11} h_x(t,z)=-\pp_t\int_Dp^D_t(x,y)K(z,y)\,\mu(\d y).
\end{equation}
Since $\pp_t p^D_t(x,y)$ is bounded on $[\vv,\vv^{-1}]\times \bar
D\times \bar D$ for any $\vv\in{]0,1[}\,$, Eq.~\eqref{E13} follows
by the dominated convergence.

Finally, Eq.~(\ref{E19}) is obtained by integrating \eqref{E13} with
respect to $\nu(\d z)$.
\end{proof}

\begin{lemma} \label{L2.2} The following formula holds:
\begin{equation*}\begin{split} P_t f(x)&= P_t^Df(x)+ \int_{{]0,t]}\times\pp D}
P_{t-s}f(z) h_x(s,z) \,\d s\nu(\d z)\\
& = P_t^Df(x)+ \int_{{]0,t]}\times\pp D} P_{t-s}f(z) P_{s/2}^D
h_\tbull (s/2,z)(x)\,\d s\nu(\d z).\end{split}\end{equation*}
\end{lemma}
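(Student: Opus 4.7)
The plan is to decompose $P_t f(x)=\EE[f(X_t(x))\,1_{\{t<\xi(x)\}}]$ according to whether the diffusion has reached $\pp D$ by time $t$. On $\{t<\tau(x)\}$ this contribution is exactly $P_t^D f(x)$, so what remains is $\EE[f(X_t(x))\,1_{\{\tau(x)\le t<\xi(x)\}}]$. Since $D$ is a relatively compact $C^2$ domain, path continuity forces $\tau(x)\le\xi(x)$ a.s.\ on $\{\tau(x)<\infty\}$, and the strong Markov property of $X$ applied at the stopping time $\tau(x)$ yields
\[
\EE\bigl[f(X_t(x))\,1_{\{\tau(x)\le t<\xi(x)\}}\,\big|\,\F_{\tau(x)}\bigr]=1_{\{\tau(x)\le t\}}\,P_{t-\tau(x)}f(X_{\tau(x)}).
\]
Integrating against the joint density $h_x(s,z)\,\d s\,\nu(\d z)$ of $(\tau(x),X_{\tau(x)}(x))$ provided by Lemma~\ref{L2.1} then produces the first stated identity.

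For the second equality, I would start from the representation \eqref{E13} of $h_x(s,z)$ and apply the Chapman--Kolmogorov identity
\[
p_s^D(x,w)=\int_D p_{s/2}^D(x,y)\,p_{s/2}^D(y,w)\,\mu(\d y).
\]
Differentiating in $s$ and using the $\mu$-symmetry $p_u^D(x,y)=p_u^D(y,x)$ (a consequence of $L$ being self-adjoint on $L^2(\mu)$), one sees that the two emerging terms coincide, so
\[
-\pp_s p_s^D(x,w)=\int_D p_{s/2}^D(x,y)\,\bigl(-\pp_u p_u^D(y,w)\bigr)\Big|_{u=s/2}\,\mu(\d y).
\]
Substituting this back into \eqref{E13} and invoking Fubini to interchange the integrals against $K(z,\cdot)\,\mu(\d w)$ and $\mu(\d y)$, the inner $\mu(\d w)$-integral is precisely $h_y(s/2,z)$, whence $h_x(s,z)=P_{s/2}^D h_\tbull(s/2,z)(x)$. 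Inserting this into the first identity produces the second.

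The chief obstacle is entirely technical: justifying the differentiation under the integral sign in the Chapman--Kolmogorov step and the Fubini interchange afterwards. Both are handled by the local boundedness of $\pp_t p_t^D$ on $[\vv,\vv^{-1}]\times\bar D\times\bar D$ already exploited at the end of the proof of Lemma~\ref{L2.1}. A secondary point worth checking is that the strong Markov decomposition is correctly set up across the random interval $[\tau(x),t]$, which reduces to the observation that $X$ cannot leave the relatively compact set $D$ without first hitting $\pp D$, ensuring $\tau(x)\le\xi(x)$ a.s.
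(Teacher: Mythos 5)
Your proof is correct and follows essentially the same route as the paper: conditioning on $(\tau(x),X_{\tau(x)}(x))$ via the strong Markov property for the first identity, then the intermediate formula $h_x(s,z)=P_{s/2}^D h_\tbull(s/2,z)(x)$ for the second. The only small variation is in deriving that intermediate formula: the paper writes $\pp_s p_s^D = Lp_s^D(\newdot,y)(x)$, applies Chapman--Kolmogorov, and commutes $L$ past $P_{s/2}^D$, whereas you differentiate the Chapman--Kolmogorov convolution directly and merge the two resulting terms by the $\mu$-self-adjointness of $L$; both exploit the same symmetry of the Dirichlet semigroup and are equally valid.
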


\begin{proof} By the strong Markov property we have
\begin{equation}\label{2.1}\begin{split}
&P_tf(x)= \EE\left[f(X_t(x))1_{\{t<\xi(x)\}}\right]\\
& = \EE\left[f(X_t(x))1_{\{t<\tau(x)\}}\right]
+\EE\left[f(X_t(x))1_{\{\tau(x)<t<\xi(x)\}}\right]\\
&=P_t^Df(x) +\EE\Big[\EE \left[f(X_t(x))1_{\{\tau(x)<t<\xi(x)\}}
\vert (\tau(x),X_{\tau(x)}(x))\right]\Big] \\
&=P_t^Df(x)+\int_{{]0,t]}\times \partial
D}P_{t-s}f(z)h_x(s,z)\,ds\,\nu(\d z).
\end{split}\end{equation}
Next, since
\begin{align*}
\pp_s p^D_s(x,y) &= L p_s^D(\newdot, y)(x)=
 L P_{s/2}^Dp_{s/2}^D(\newdot, y)(x)\\
&= P_{s/2}^D(L p_{s/2}^D(\newdot, y))(x)=P_{s/2}^D(\pp_u p_{u}^D(\newdot, y)|_{u=s/2})(x),
\end{align*}
 it follows from (\ref{E13}) that
\begin{equation}
\label{**}
h_x(s,z) = P_{s/2}^D h_\tbull(s/2,z)(x).
\end{equation}
This completes the proof. 
\end{proof}

We remark that formula (\ref{**}) can also be derived from the
strong Markov property without invoking Eq.~(\ref{E13}). Indeed, for any
$u<s$ and any measurable set $A\subset \pp D$, the strong Markov
property implies that
\begin{equation*}\begin{split}
\mathbb P\left\{\tau(x)>s,\ X_{\tau(x)}(x)\in
 A\right\}&= \mathbb E\Big[\big(1_{\{u<\tau(x)\}}\,
\mathbb P\left\{\tau(x)>s,\ X_{\tau(x)}(x)\in
 A|\scr F_u\right\}\Big]\\
 &=\int_Dp_u^D(x,y)\,\mathbb P\left\{\tau(y)>s-u,\ X_{\tau(y)}(y)\in
 A\right\}\mu(\d y),
\end{split}\end{equation*}
and thus,
$$h_x(s,z) = P_{u}^D h_\tbull(s-u,z)(x),\quad
s>u>0,\ x\in D,\ z\in \pp D.$$

\begin{lemma}
\label{L2.3}
Let $D$ be a relatively compact open domain
and $\rr_{\pp D}$ be the Riemannian distance to the boundary $\pp D$.
Then there exists a constant $C>0$ depending on $D$ such that
$$\mathbb P\{\tau(x)\le t\}\le C \e^{-\rr_{\pp D}^2(x)/16t},\quad
x\in D,\ t>0.$$
\end{lemma}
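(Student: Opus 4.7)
The plan is to reduce the exit-time estimate to a displacement estimate for the radial process $\rr(x,X_s(x))$ and then apply an exponential supermartingale inequality. Since $X_{\tau(x)}(x)\in\pp D$ whenever $\tau(x)<\infty$, the event $\{\tau(x)\le t\}$ forces the diffusion to travel Riemannian distance at least $\rr_{\pp D}(x)$ from its starting point within time $t$:
\[
\{\tau(x)\le t\}\subseteq\Bigl\{\sup_{0\le s\le t}\rr(x,X_s(x))\ge \rr_{\pp D}(x)\Bigr\}.
\]
It therefore suffices to bound $\PP\{\sup_{s\le t}\rr(x,X_s(x))\ge r\}$ uniformly in $x\in D$ and $0<r\le\mathrm{diam}(D)$.

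Since $\bar D$ is compact, Ricci curvature is bounded below on $\bar D$ and $|\nn V|$ is bounded there. By the Laplacian comparison theorem there is a constant $K=K(D,V)$ such that $L\rr(x,\cdot)\le K$ on the smooth locus of $\rr(x,\cdot)$ in $\bar D$, outside a small neighborhood of $x$. Kendall's device for the cut locus then produces the semimartingale inequality
\[
d\rr(x,X_s(x))\le dB_s+K\,ds,\qquad s<\tau(x),
\]
where $B$ is a one-dimensional Brownian motion and the non-positive local time at the cut locus is absorbed into the inequality. Standard exponential-martingale and Markov estimates then give
\[
\PP\Bigl\{\sup_{s\le t}\bigl(B_s+Ks\bigr)\ge r\Bigr\}\le \exp\bigl(-\ll r+\ll K t+\tfrac12\ll^2 t\bigr),
\]
and optimizing in $\ll>0$ yields the Gaussian bound $\exp(-(r-Kt)^2/(2t))$ whenever $r\ge Kt$.

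To conclude I would split into two regimes with $r=\rr_{\pp D}(x)$. For $t\le r/(2K)$ one has $r-Kt\ge r/2$, giving $\exp(-r^2/(8t))\le \exp(-r^2/(16t))$. For $t>r/(2K)$ the quantity $r^2/(16t)$ is bounded above in terms of $\mathrm{diam}(D)$ and $K$, so the trivial estimate $\PP\{\tau(x)\le t\}\le 1$ already implies the claim upon enlarging the constant $C=C(D,V)$.

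The main technical subtlety is the non-smoothness of $y\mapsto \rr(x,y)$ at $x$ and on the cut locus. Near $x$ one has $L\rr$ unbounded (of order $1/\rr$), which is handled either by stopping at the first time $\rr(x,X_s(x))$ exceeds a small $\vv>0$ and starting the analysis from there, or by working with the smooth surrogate $\sqrt{\rr^2+\vv^2}$ and letting $\vv\to 0$. The cut-locus contribution is dealt with by Kendall's classical argument. The factor $1/16$ in the target exponent (rather than the sharp $1/2$) provides ample slack to absorb all such technical losses.
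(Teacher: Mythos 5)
Your strategy — reduce to a radial displacement estimate and apply an exponential-martingale/Gaussian tail bound, then split into the regimes $t\le\rr_{\pp D}(x)/(2K)$ and $t>\rr_{\pp D}(x)/(2K)$ — is the same one the paper uses, and your case analysis and constants do produce the $\rr_{\pp D}^2(x)/(16t)$ exponent with the right amount of slack. (Incidentally, the martingale part of $\rr_x(X_s)$ for $L=\Delta+\nabla V$ has quadratic variation $2\,\d s$, so the inequality should read $\d\rr\le\sqrt2\,\d B_s+K\,\d s$ rather than $\d B_s+K\,\d s$; with that correction the optimization gives $\exp(-(r-Kt)^2/4t)$, and the case $t\le r/(2K)$ yields $\e^{-r^2/16t}$ exactly, so this is only a bookkeeping slip.)

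There is, however, a genuine gap at the origin, and the fixes you sketch do not close it. You work with $\rr_x(X_s)$ directly and propose the semimartingale bound $\d\rr_x(X_s)\le\sqrt2\,\d B_s+K\,\d s$, with $K$ a uniform bound on $L\rr_x$ away from the cut locus. But near $x$ one has $L\rr_x\approx(d-1)/\rr_x\to+\infty$: the singularity is \emph{positive} and hence goes the wrong way for your inequality. The drift bound $L\rr_x\le K$ fails precisely in a neighbourhood of $x$, so the claimed supermartingale property of $\exp(\ll\rr-\ll Ks-\ll^2 s)$ is not established. Neither of your two suggested patches repairs this. Stopping at the first time $\rr$ reaches $\vv$ does not help because nothing prevents the radial process from re-entering the bad region afterwards (and for $d\le 2$ it does so with positive probability), so the drift bound again fails on a set of positive measure. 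The smooth surrogate $g_\vv=\sqrt{\rr^2+\vv^2}$ fares no better: a direct computation gives $Lg_\vv=\tfrac{\rr}{g_\vv}L\rr+\tfrac{\vv^2}{g_\vv^3}\sim d/\vv$ near $\rr=0$, which blows up as $\vv\to0$, so there is no $\vv$-uniform drift bound to pass to the limit with.

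The paper sidesteps all of this by running the exponential supermartingale for $\rr_x^2$ rather than $\rr_x$: since $L\rr_x^2=2\rr_x L\rr_x+2|\nabla\rr_x|^2$ and $\rr_x L\rr_x$ has a removable singularity at $x$ (with limit $d-1$), the quantity $L\rr_x^2$ is \emph{globally} bounded on $\bar D$ off the cut locus by a constant $c$ depending only on $D$. One then forms
$$
Z_s=\exp\Bigl(\tfrac{\dd}{t}\rr_x^2(X_s)-\tfrac{\dd}{t}cs-\tfrac{4\dd^2}{t^2}\!\int_0^s\rr_x^2(X_u)\,\d u\Bigr),
$$
which is an honest supermartingale (the Kendall cut-locus local time enters with the favourable sign after multiplication by $2\rr_x\ge0$), and the maximal inequality with $\dd=1/8$ gives $\e^{-R^2/16t}$ in one stroke, without any regime splitting. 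So the fix you need is not a cutoff in space or a smoothing of $\rr$, but the passage from $\rr$ to $\rr^2$; the rest of your argument then goes through.
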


\begin{proof} For $x\in D$, let $R:= \rr_{\pp D} (x)$ and $\rr_x$ the Riemannian
distance function to $x$. Since $D$ is relatively compact, there
exists a constant $c>0$ such that $L \rr_x^2\le c$ holds on $D$
outside the cut-locus of $x$. Let $\gg_t:= \rr_x(X_t(x)),\ t\ge
0$. By It\^o's formula, according to Kendall \cite{K}, there exists a
 one-dimensional Brownian motion $b_t$ such that
$$\d \gg_t^2 \le 2\sqrt 2 \gg_t\,\d b_t + c\,\d t,\quad t\le \tau(x).$$
Thus, for fixed $t>0$ and $\dd>0$,
$$Z_s:= \exp\left(\ff \dd t \gg_s^2 -\ff \dd t cs - 4 \ff{\dd^2}
{t^2} \int_0^s \gg_u^2\d u\right),\quad s\le \tau(x)$$ is a
supermartingale. Therefore,
\begin{equation*}\begin{split}
\mathbb P\{\tau(x)\le t\} &= \mathbb
P\left\{\max_{s\in [0,t]} \gg_{s\land\tau(x)}\ge R\right\}
\le \mathbb P\left\{
\max_{s\in [0,t]} Z_{s\land \tau(x)} \ge \e^{\dd R^2/t -\dd c-
4\dd^2R^2/t}\right\}\\
&\le \exp\left( c\dd - \ff 1 t (\dd R^2-4\dd^2
R^2)\right).\end{split}\end{equation*}
The proof is completed by
taking $\dd:= 1/8$.
\end{proof}

\begin{lemma}\label{L2.5}
On a measurable space
$(E,\F,\tilde\mu)$ satisfying $\tilde\mu(E)<\infty$, let $f\in L^1(\tilde\mu)$ be non-negative with
$\tilde\mu(f)>0$. Then for every measurable function $\psi$ such that
 $\psi f\in L^1(\tilde\mu)$, there holds:
\begin{equation}
\label{E2} \int_E \psi f\,\d\tilde\mu \le \int_E f\log\ff f
{\tilde\mu(f)}\,\d\tilde\mu+\tilde\mu(f) \log\int_E\e^\psi \,\d\tilde\mu.
\end{equation}
\end{lemma}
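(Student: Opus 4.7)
The approach is to reduce the inequality to the classical Donsker--Varadhan variational formula for relative entropy via a simple normalization. I would set $a:=\tilde\mu(f)>0$ and introduce the probability measure $\nu$ on $(E,\F)$ defined by $\d\nu=(f/a)\,\d\tilde\mu$. Under this change of measure the left-hand side of \eqref{E2} becomes $a\int_E\psi\,\d\nu$, while
$$\int_E f\log\frac{f}{\tilde\mu(f)}\,\d\tilde\mu \;=\; a\int_E\log\frac{\d\nu}{\d\tilde\mu}\,\d\nu.$$
Thus, after dividing by $a$, the claim reduces to the Gibbs-type bound
$$\int_E\psi\,\d\nu \;\le\; \int_E\log\frac{\d\nu}{\d\tilde\mu}\,\d\nu + \log\int_E\e^{\psi}\,\d\tilde\mu.$$

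To establish this I would assume without loss of generality that $\int_E\e^{\psi}\,\d\tilde\mu<\infty$ (otherwise there is nothing to prove), and apply Jensen's inequality to the concave function $\log$ and the probability measure $\nu$. On $\{f>0\}$ one has $\d\tilde\mu/\d\nu=a/f$, while $\nu$ assigns no mass to $\{f=0\}$. Using $\int_E\e^{\psi}\,\d\tilde\mu\ge\int_{\{f>0\}}\e^{\psi}\,\d\tilde\mu$ and Jensen, one gets
$$\log\int_E\e^{\psi}\,\d\tilde\mu \;\ge\; \log\int_{\{f>0\}}\e^{\psi}\,\frac{\d\tilde\mu}{\d\nu}\,\d\nu \;\ge\; \int_E\Bigl(\psi+\log\frac{\d\tilde\mu}{\d\nu}\Bigr)\d\nu \;=\; \int_E\psi\,\d\nu - \int_E\log\frac{\d\nu}{\d\tilde\mu}\,\d\nu.$$
Rearranging and multiplying through by $a=\tilde\mu(f)$ yields \eqref{E2}.

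The only real obstacle is bookkeeping the integrability conditions so Jensen's inequality applies legitimately. The hypothesis $\psi f\in L^1(\tilde\mu)$ is precisely what ensures that $\psi$ is $\nu$-integrable; the entropy term $\int f\log(f/a)\,\d\tilde\mu$ is bounded below (since $x\log x\ge -\e^{-1}$) and may equal $+\infty$, in which case \eqref{E2} is vacuous; and the null set $\{f=0\}$ for $\nu$ is handled by restricting the integrands involving $\d\tilde\mu/\d\nu$ to $\{f>0\}$. None of this is substantive, and the one-line Jensen argument is the whole content of the lemma.
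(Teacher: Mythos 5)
Your argument is correct and is essentially the same as the paper's: both proofs normalize by $\tilde\mu(f)$ to pass to the probability measure $\d\nu=(f/\tilde\mu(f))\,\d\tilde\mu$, dispose of the case $\int_E\e^{\psi}\,\d\tilde\mu=\infty$ trivially, reduce to $\{f>0\}$, and then apply Jensen's inequality to $\log$ under $\nu$ before splitting the integrand using $\psi f\in L^1(\tilde\mu)$. Packaging it as the Donsker--Varadhan/Gibbs variational bound is only a change of framing, not of substance.
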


\begin{proof} This is a direct consequence of \cite{S} Lemma~6.45. We give a proof for completeness.
Multiplying $f$ by a positive constant, we can assume that $\tilde\mu(f)=1$. 
If $\int_E\e^\psi \,\d\tilde\mu=\infty$, then \eqref{E2} is clearly satisfied. 

If $\int_E\e^\psi \,\d\tilde\mu<\infty$, then since $\int_E\e^\psi \,\d\tilde\mu\ge \int_{\{f>0\}}\e^\psi \,\d\tilde\mu$, we can assume that $f>0$ everywhere. Now from the fact that $e^\psi\frac{1}{f}\in L^1(f\tilde\mu)$,  we can apply Jensen's inequality to obtain
$$
\log\left(\int_E e^\psi \, \d\tilde\mu\right)=\log\left(\int_E e^\psi\frac{1}{f} \, f\d\tilde\mu\right)
\ge \int_E\log\left(e^\psi\frac{1}{f}\right)\,f\d\tilde\mu
$$
(note the right-hand-side belongs to $\R\cup\{-\infty\}$).
To finish we remark that since $\psi f\in L^1(\tilde\mu)$,
$$
\int_E\log\left(e^\psi\frac{1}{f}\right)\,f\d\tilde\mu=\int_E \psi f\,d\tilde\mu-\int_E f\log f\,d\tilde\mu.
$$
\end{proof}

Finally, in order to obtain precise gradient estimate of the type
(\ref{1.3}),
where the constant in front of $\rr(x,y)/t$ is explicit and sharp,
we establish the following revision of \cite[Theorem 2.1]{Zhang}.

\begin{prop} \label{Pr2.5}
Let $D$ be a relatively compact open $C^2$
domain in $M$ and $K$ a compact subset of $D$.
For any $\vv>0$,
there exists a constant $C(\vv)>0$ such that
\begin{align}
|\nn\log p_t^D(\newdot,y)(x)|\le \ff{C(\vv)\log
(1+t^{-1})}{\sqrt t} + \ff{(1+\vv)\rr(x,y)}{2t}&,\notag \\
t\in {]0,1[},\ x\in K,\ y\in D&.
\label{A21}
\end{align}
In addition, if $D$ is convex, the
above estimate holds for $\vv=0$ and some constant $C(0)>0$.
\end{prop}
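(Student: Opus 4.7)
The plan is to refine the argument of \cite{Zhang} by tracking constants sharply, combining a local gradient estimate of Souplet--Zhang/Hamilton type (whose extension to the drift case $V\ne 0$ is given in the Appendix) with sharp Gaussian bounds for the Dirichlet heat kernel on the relatively compact $C^2$ domain $D$. Fix $x\in K$, $y\in D$, $t\in {]0,1[}\,$, write $\rho:=\rho(x,y)$, and set $R_0:=\tfrac12\mathrm{dist}(K,\pp D)>0$, so that $B(x,R_0)\subset D$ uniformly in $x\in K$.

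First I would apply the local gradient estimate in a square-root (Hamilton-type) form to the positive caloric function $u(s,w):=p_s^D(w,y)$ on the parabolic cylinder $Q:=B(x,R_0)\times {]0,t]}$, obtaining an inequality of the shape
$$
 t\,|\nn\log u|^2(x,t)\le \log\frac{A}{u(x,t)}+C_1\log(1+t^{-1}),
$$
where $A:=\sup_Q u$ and $C_1$ depends only on a Ricci lower bound over $\bar D$ and on $\|V\|_{C^2(\bar D)}$. The coefficient $1$ in front of $\log(A/u)$ is the sharp one that will produce the final $(1+\vv)/2$ constant.

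The second step controls the ratio $A/u(x,t)$ by sharp Aronson-type Gaussian estimates. For any $\vv_1>0$, the upper bound
$$p_s^D(w,y)\le C_2(\vv_1)\,s^{-n/2}\exp\!\left(-\frac{\rho(w,y)^2}{(4+\vv_1)s}\right)\omega(y,s)$$
and the matching lower bound for $x\in K$ bounded away from $\pp D$
$$p_t^D(x,y)\ge c(\vv_1,K)\,t^{-n/2}\exp\!\left(-\frac{\rho^2}{(4-\vv_1)t}\right)\omega(y,t)$$
both carry a Hopf-type boundary factor $\omega(y,\cdot)$ (such as $\min\{1,\rho_{\pp D}(y)/\sqrt s\}$), which cancels in the ratio. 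After absorbing polynomial-in-$1/t$ factors this leaves
$$\log\frac{A}{u(x,t)}\le \frac{\rho^2}{(4-\vv_1)t}+C_3(\vv_1,K,D)\log(1+t^{-1}).$$

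Inserting this into the first display and using $\sqrt{a+b}\le\sqrt a+\sqrt b$,
$$|\nn\log u|(x,t)\le \frac{\rho}{t\sqrt{4-\vv_1}}+\frac{\sqrt{C_4(\vv_1)\log(1+t^{-1})}}{\sqrt t}.$$
Given $\vv>0$, choose $\vv_1>0$ small enough that $1/\sqrt{4-\vv_1}\le(1+\vv)/2$; since $\sqrt{\log(1+t^{-1})}\le \log(1+t^{-1})$ on ${]0,1[}\,$, the remainder is at most $C(\vv)\log(1+t^{-1})/\sqrt t$, which gives~\eqref{A21}. When $D$ is convex, the Gaussian upper bound holds with the exact constant $1/4$ via a reflection argument, and the lower bound with $1/4$ via a Cheeger--Yau type comparison, so the same proof runs with $\vv_1=\vv=0$ and yields the statement for $\vv=0$. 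The principal technical obstacle is the coupling between the sharp Gaussian lower bound and the upper bound on $A$ when $y$ approaches $\pp D$: the Hopf-type factor $\omega(y,\cdot)$ must be extracted consistently on both sides so that it cancels in the ratio, leaving only the clean $\rho^2/t$ and logarithmic contributions.
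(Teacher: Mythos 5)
Your outline captures the general shape of the paper's argument (local gradient estimate plus Gaussian-type heat kernel bounds), but both of the key technical ingredients are asserted in a form that the paper shows is not available for free, and that in fact constitutes the real work of the proof.

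First, the sharp local estimate
$t\,|\nn\log u|^2\le \log(A/u)+C_1\log(1+t^{-1})$
with leading coefficient exactly $1$ is not what the Appendix provides. Theorem~\ref{TA1} is the Souplet--Zhang estimate
$|\nn\log u|\le c\big(R^{-1}+T^{-1/2}+\sqrt K\big)\big(1+\log(\sup u/u)\big)$,
which after squaring gives a $\big(\log(A/u)\big)^2$ term with a dimension-dependent constant $c^2$, not the Hamilton form with coefficient $1$ that you need. The paper obtains the sharp coefficient precisely by \emph{not} using the local estimate alone: it uses Souplet--Zhang only in the strip $\rr_{\pp D}(x)^2\le s$ (where it merely gives $|\nn\log f|\le c/\sqrt s$), and on the complementary region it proves the sharp bound via the maximum principle applied to the Li--Yau-type function $h=\ff{s|\nn f|^2}{(1+2ks)f}-f\log\ff{bB}{f}$. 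Your step~1 hides this entire argument.

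Second, the Hopf-factor cancellation you invoke does not actually occur with the bounds you propose. A factor of the form $\omega(y,s)=\min\{1,\rr_{\pp D}(y)/\sqrt s\}$ is decreasing in $s$, so $\sup_{s\le t}\omega(y,s)=1$ while the lower bound at time $t$ carries $\omega(y,t)$; the ratio $A/u(x,t)$ then picks up an uncancelled factor $1/\omega(y,t)$, which blows up as $y\to\pp D$. You flag this as ``the principal technical obstacle'' but offer no resolution. The paper resolves it by choosing bounds where the $y$-dependence is \emph{time-independent}: Wang's Dirichlet lower bound $p^D_{2t_0}(x,y)\ge c\varphi(y)t_0^{-d/2}\e^{-\rr^2/8t_0}$ and the intrinsic ultracontractivity upper bound $p^D_{s+t_0}(z,y)\le c\varphi(y)t_0^{-(d+2)/2}$, so that $\varphi(y)$ genuinely cancels. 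Finally, your treatment of the convex case is also off: a reflection argument for an upper Gaussian bound has no analogue on a general Riemannian domain, and Cheeger--Yau comparison gives lower bounds for the manifold heat kernel, not the Dirichlet kernel on a domain. The paper instead uses convexity via Wang's \cite{W98} result, and handles the non-convex case by a conformal change of metric with controlled loss as $\vv\to 0$. As written, your proposal therefore contains several genuine gaps rather than a complete alternative proof.
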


\begin{proof} Since $\dd:=\min_K \rr_{\pp D}>0$, it suffices to deal with the
case where $0<t\le 1\land \dd$. To this end, we combine the
argument in \cite{Zhang} with relevant results from
\cite{W98,W07}.\smallskip

(a) \ Let $t_0=t/2$ and $y\in D$ be fixed. Take
$$f(x,s)= p_{s+t_0}^D(x,y),\quad x\in D,\ s>0.$$
Applying Theorem \ref{TA1} of the Appendix to the cube $$Q:=
B(x,\rr_{\pp D}(x))\times [s- \rr_{\pp D}(x)^2/2, s]\subset D\times
[-t_0,t_0], \quad s\leq t_0,$$ we obtain
\begin{equation}\label{AA1}|\nn\log f(x,s)|\le \ff {c_0}{\rr_{\pp
D}(x)}\Big(1+ \log \ff A {f(x,s)}\Big),\quad s\le t_0,\end{equation}
where $A:=\sup_Q f$ and $c_0>0$ is a constant depending on the
dimension and curvature on $D$. By \cite[Theorem 5.2]{Li},
\begin{equation}\label{AA2}A\le c_1 f\left(x, s+ \rr_{\pp D}(x)^2\right),\quad
s\in {]0,1]},\ x\in D,\end{equation} holds for some constant $c_1>0$
depending on $D$ and $L$. Moreover, by the boundary Harnack
inequality of \cite{FGS} (which treats $Z=0$ but generalizes 
easily to non-zero $C^1$ drift~$Z$), \begin{equation}\label{AA3} 
f\left(x,s+\rr_{\pp D}(x)^2\right)\le c_2 f(x,s),\quad s\in {]0,1]},\ x\in
D,\end{equation} for some constant $c_2>0$ depending on $D$ and $L$.
Combining (\ref{AA1}), (\ref{AA2}) and (\ref{AA3}), there exists a
constant $c>0$ depending on $D$ and $L$ such that
\begin{equation}\label{A23} \left|\nn\log f(x,s)\right|\le \ff c {\sqrt s},
\quad x\in D,\ s\in{]0,t_0]}\ \text{ with } \rr_{\pp D}(x)^2\le s.
\end{equation}

(b) \ Let $$
 \Omega= \left\{(x,s):\ x\in D,\ s\in [0,t_0],\ \rr_{\pp D}(x)^2\ge s\right\}$$
and $B= \sup_{\Omega} f$. Since $\pp_s f= Lf$, for any constant
$b\ge 1$,
we have
$$(L-\pp_s)\Big(f\log \ff{bB}f\Big) =-\ff {|\nn f|^2}f.$$
Next, again by $\pp_s f=Lf$ and the Bochner-Weizenb\"ock formula,
$$(L-\pp_s) \ff{|\nn f|^2}f\ge - 2 k \ff{|\nn f|^2} f,$$ where $k\ge
0$ is such that $\Ric-\nn Z\ge -k$ on $D$. Then the function
$$h:= \ff {s|\nn f|^2}{(1+2k s)f} - f\log \ff {bB}f$$
satisfies
\begin{equation}\label{A20}(L-\pp_s)h\ge 0\quad \text{on}\ D\times
{]0,\infty[}.\end{equation}
Obviously $h(\newdot, 0)\le 0$, and
(\ref{A23}) yields $h(x,s)\le 0$ for $s=\rr_{\pp D}(x)^2$ provided the
constant $b$ is large enough. Then the maximum principle and
inequality (\ref{A20}) imply $h\le 0$ on~$\Omega$. Thus,
\begin{equation}\label{A24} |\nn\log f(x,s)|^2 \le (2k+ s^{-1}) \log \ff {bB}f,\
\quad (x,s)\in \Omega.\end{equation}

(c) \ If $D$ is convex, by \cite[Theorem 2.1]{W98} with $\dd=\sqrt
t$ and $t=2t_0$, we obtain (note the generator therein is $\ff 1 2
L$)
$$f(x,t_0)= p_{2t_0}^D(x,y)=p_{2t_0}^D(y,x)\ge c_1\varphi(y) \,t_0^{-d/2}
\e^{-\rr(x,y)^2/8t_0},\quad x\in K,\ y\in D$$
for some constant $c_1>0$, where $\varphi>0$ is the first Dirichlet
eigenfunction of $L$ on~$D$.
On the other hand, the intrinsic ultracontractivity for
$P_t^D$ implies (see e.g.~\cite{OW})
$$f(z,s)= p_{s+t_0}^D(z,y)\le c_2 \,\varphi(y)\, t_0^{-(d+2)/2}, \quad
z,y\in D,\ s\le t_0,$$ for some constant $c_2>0$ depending on $D$, $K$
and $L$. Combining these estimates we obtain
$$\ff B {f (x,s)}\le c_3\, t_0^{-1} \e^{\rr(x,y)^2/8t_0},\quad 
x\in K,\ s\le t_0,$$ for some
constant $c_3>0$ depending on $D$, $K$ and $L$. Hence by (\ref{A24}) for
$s=t_0$ we get the existence of a constant $C>0$ such that
$$|\nn \log p_{2t_0}^D(\newdot,y)|^2 \le (t_0^{-1}+ 2k) \left(C+
\log t_0^{-1} + \ff{\rr(x,y)^2}{8t_0}\right)$$ for all $y\in D$, 
$x\in K$ and $t_0\in {]0,1[}$ with $t_0\le
\rr_{\pp D}(x)^2$. This completes the proof by noting that $t=
2t_0$.\smallskip

(d) \ Finally, if $D$ is not convex, then there exists a constant
$\si>0$ such that
$$\langle\nn_NX,X\rangle\ge -\si|X|^2, \quad \ X\in T\pp D,$$ where $N$ is the
outward unit normal vector field of $\pp D$.
Let $f\in C^\infty(\bar D)$ such that $f=1$ for
$\rr_{\pp D}\ge \vv,\ 1\le f\le
\e^{2\vv\si}$ for $\rr_{\pp D}\le \vv$, and $N\log f|_{\pp D}\ge
\si$. By Lemma~2.1 in~\cite{W07}, $\pp D$ is convex under the metric
$\tilde g:= f^{-2}\langle\newdot,\newdot\rangle$.
Let $\tilde\DD, \tilde\nn$ and $\tilde\rr$
be respectively the Laplacian, the gradient and the Riemannian
distance induced by $\tilde g$.
By Lemma~2.2 in~\cite{W07},
$$L:=\DD +\nn V= f^{-2}\left[\tilde\DD+(d-2)f\nn f\right] +\nn V.$$
Since $D$ is convex under $\tilde g$, as explained in the first
paragraph in Section 2 of \cite{W07},
$$\tilde g(\tilde\nabla \tilde\rho(y,\newdot), \tilde\nn\varphi)|_{\pp D}<0,$$
so that
$$\tilde\si(y):= \sup_D \tilde g(\tilde\nabla \tilde\rho(y,\newdot), \tilde\nn\varphi)
<\infty,\quad y\in D.$$
Hence, repeating the proof of Theorem 2.1 in
\cite{W98}, but using $\tilde \rr$ and $\tilde\nn$ in place of $\rr$ and $\nn$
respectively, and taking into account that $f\to 1$ uniformly as $\vv\to 0$, we
obtain
\begin{equation*}\begin{split} p^D_{2t_0}(x,y)&\ge C_1(\vv)\varphi(y)
t_0^{-d/2}\e^{-C_2(\vv)\tilde \rr(x,y)^2/8t_0}\\
&\ge C_1(\vv)\varphi(y) t_0^{-d/2}\e^{-C_2(\vv)C_3(\vv)
\rr(x,y)^2/8t_0}\end{split}\end{equation*} for some constants
$C_1(\vv), C_2(\vv), C_3(\vv)>1$ with $C_2(\vv), C_3(\vv) \to 1$ as
$\vv\to 0$. Hence the proof is completed.
\end{proof}

\section{Gradient estimate for Dirichlet heat semigroup
using coupling of diffusion processes}
\label{sec:3}

\begin{prop}
\label{P3.1} Let $D$ be a relatively compact $C^2$ domain in $M$.
For every compact subset $K$ of $D$, there exists
a constant $C=C(K,D)>0$ such that for all $\delta>0$, $t>0$, $x_0\in K$
and for all bounded positive functions $f$ on~$M$,
\begin{equation}
\label{E7}
\begin{split} &|\nn P_t^Df(x_0) |\\&\le \delta
P_t^D\left(f\log\left(\frac{f}{P_t^Df(x_0)}\right)\right)(x_0)
+C\left(\frac{1}{\delta (t\wedge 1)}+1\right)P_t^Df(x_0).
\end{split}
\end{equation}
\end{prop}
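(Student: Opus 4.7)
My plan is to combine a \emph{parallel coupling} of two $L$-diffusions, with a Girsanov correction forcing deterministic successful coupling, with the entropy inequality of Lemma~\ref{L2.5}. Fix $x_0\in K$ and a unit tangent vector $v\in T_{x_0}M$; it suffices to control $\pm\langle\nn P_t^D f(x_0),v\rangle$. For small $\vv>0$ let $x_\vv:=\exp_{x_0}(\vv v)$ and construct, on a common probability space, two $L$-diffusions $X=X(x_0)$ and $Y^\vv=Y^\vv(x_\vv)$, coupled so that the driving Brownian motion of $Y^\vv$ is the stochastic parallel transport along the minimal geodesic joining $X_s$ and $Y_s^\vv$ (off cut-loci). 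Impose an additional Girsanov drift $\phi^\vv_s$ on $Y^\vv$ so that $\rr(X_s,Y_s^\vv)=\vv(1-s/T)_+$ with $T:=t\wedge 1$, and identify $Y^\vv\equiv X$ for $s\ge T$; the prescribed linear decrease forces $|\phi^\vv_s|=O(\vv/T)$ uniformly on~$\bar D$, up to cut-locus and $\nn V$ corrections that are bounded by compactness of~$\bar D$.

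\textbf{Representation and entropy bound.} By Girsanov's theorem, with density
$$R_\vv:=\exp\!\left(\int_0^T\!\langle\phi^\vv_s,\d B_s\rangle-\tfrac12\int_0^T\!|\phi^\vv_s|^2\,\d s\right),$$
we obtain $P_t^Df(x_\vv)=\EE[f(X_t)\,R_\vv\,1_{\{t<\tau\wedge\tau^\vv\}}]$, where $\tau,\tau^\vv$ are the exit times of $X,Y^\vv$. Subtracting $P_t^Df(x_0)$, dividing by $\vv$ and letting $\vv\to 0$, the discrepancy $\{t<\tau\}\setminus\{t<\tau\wedge\tau^\vv\}$ is controlled by Lemma~\ref{L2.3} (probability at most $C\e^{-c\rr_{\pp D}(x_0)^2/T}$, uniformly negligible on~$K$), yielding the representation
$$\langle\nn P_t^Df(x_0),v\rangle=\EE\bigl[f(X_t)\,M\,1_{\{t<\tau\}}\bigr],$$
with $M:=(\d R_\vv/\d\vv)|_{\vv=0}$ an It\^o integral of order-$1$ integrand over $[0,T]$, hence sub-Gaussian with $\log\EE\,\e^{\ll M}\le C(K,D)\ll^2/T$ for all $\ll\in\R$. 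Apply Lemma~\ref{L2.5} with $\tilde\mu=\PP|_{\{t<\tau\}}$ (so $\tilde\mu(f(X_t))=P_t^Df(x_0)$) and $\psi=M/\dd$:
$$\langle\nn P_t^Df(x_0),v\rangle\le\dd\,P_t^D\!\left(f\log\frac{f}{P_t^Df(x_0)}\right)\!(x_0)+\dd\,P_t^Df(x_0)\log\EE\,\e^{M/\dd}.$$
The sub-Gaussian estimate gives $\dd\log\EE\,\e^{M/\dd}\le C(K,D)/(\dd(t\wedge 1))$; combining with the symmetric argument for $-v$ and absorbing the additive ``$+1$'' into the constant yields~\eqref{E7}.

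\textbf{Main obstacle.} The sensitive step is passing to the limit $\vv\to 0$ in the Girsanov representation in the presence of two singular sets: the Dirichlet boundary $\pp D$ and the cut-locus of $X$, where the parallel coupling is ill-defined. Lemma~\ref{L2.3}, together with the positivity of $\min_K\rr_{\pp D}$, absorbs the exit-time correction into the main term. The cut-locus issue is handled by the standard Kendall--It\^o--Tanaka device (adding a nonpositive singular term to $\d\rr(X_s,Y_s^\vv)$), which if anything improves the bound on~$\phi^\vv_s$. After these inputs, everything reduces to elementary stochastic-integral estimates on the compact set~$\bar D$.
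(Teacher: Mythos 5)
Your overall strategy is the same as the paper's: parallel coupling with a Girsanov drift that forces coupling by a deterministic time, combined with Lemma~\ref{L2.5} applied to the linearization of the Radon--Nikodym density. The representation and entropy steps are essentially correct, and the sub-Gaussian bound for $M$ via $\langle M\rangle\le C/T$ a.s.\ and the exponential martingale is fine. However, there is a genuine gap in your treatment of the boundary discrepancy, which is in fact the hardest part of the paper's argument.

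Writing $\tau,\tau^\vv$ for the exit times, the exact identity is
\begin{equation*}
P_t^Df(x_\vv)-P_t^Df(x_0)
=\EE\bigl[f(X_t)(R_\vv-1)1_{\{t<\tau\wedge\tau^\vv\}}\bigr]
+\EE\bigl[f(Y^\vv_t)R_\vv1_{\{\tau\le t<\tau^\vv\}}\bigr]
-\EE\bigl[f(X_t)1_{\{\tau^\vv\le t<\tau\}}\bigr],
\end{equation*}
and after dividing by $\vv$ one must control the middle term, not just show that its probability is exponentially small in $1/t$. Lemma~\ref{L2.3} gives $\PP\{\tau\le ct\}\lesssim\e^{-\rho_{\partial D}(x_0)^2/(16ct)}$, but that alone does not produce the crucial factor $O(\vv)$ required for the $\vv\to 0$ limit, and it does not control $f(Y^\vv_t)$, which on this event is weighted against the full $\|P_{t-ct}^Df\|_\infty$ rather than $P_t^Df(x_0)$. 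The paper closes this term with three additional ingredients you do not supply: (i) a one-dimensional hitting-time analysis near $\partial D$ (the $Y_s,Y_s',S,S'$ argument) producing the needed $O(\vv/t)$ factor in $\Q^\vv\{ct<\tau^\vv\mid\tau=u\}$; (ii) a Harnack comparison (from steps (c)--(d) of Proposition~\ref{Pr2.5}) $\|P_{t-ct}^Df\|_\infty\le\e^{C(K,D)/t}P_t^Df(x_0)$; and (iii) tuning the \emph{free} coupling time $ct$ so that $c<\operatorname{dist}(K,\partial D)/(16C(K,D))$, making the Gaussian decay from Lemma~\ref{L2.3} dominate the Harnack blow-up. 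Your choice $T=t\wedge1$ forfeits this last degree of freedom; with it the boundary term would not be absorbable into $C(1/(\dd t)+1)P_t^Df(x_0)$. Until these points are filled in, the passage to the limit in your Girsanov representation is not justified.

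A minor point: in the paper's construction the coupling distance $\rho(X_s,X_s^h)$ is strictly decreasing, so $(X_s,X_s^h)$ in fact never reaches the cut-locus and no It\^o--Tanaka correction is needed; your invocation of it is harmless but not necessary.
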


\begin{proof}  
We assume that $t\in]0,1[$, the other case will be treated at the very end of the proof.

We write $\nabla
V=Z$ so that $L=\Delta +Z$. Since $P_t^D$ only depends on the
Riemannian metric and the vector field $Z$ on the domain $D$,
by modifying the metric and $Z$
outside of $D$ we may assume that $\Ric-\nn Z$ is bounded below (see
e.g. \cite{TW}); that is,
\begin{equation}\label{**0} \Ric-\nn Z\ge -\kappa\end{equation} for some
constant $\kappa\ge 0$.

Fix $x_0\in K$. Let $f$ be a positive bounded function on $M$ and $X_s$ a
diffusion with generator $L$, starting at $x_0$.
For fixed $t\le 1$, let
$$
v=\frac{\nn P_t^Df(x_0)}{|\nn P_t^Df(x_0)|}
$$
and denote by $u\mapsto \varphi(u)$ the geodesics in $M$ satisfying $\dot
\varphi(0)=v$. Then
$$
\left.\frac{\d}{\d u}\right\vert_{u=0}P_t^Df(\varphi(u))=\big|\nn P_t^Df(x_0)\big|.
$$
To formulate the coupling used in \cite{ATW}, we introduce some
notations.

If $Y$ is a semimartingale in $M$, we denote by $\d Y$ its It\^o
differential and by $\d_m Y$ the martingale part of $\d Y$:
in local coordinates,
$$
\d Y=\left(\d Y^i+\frac12\Gamma_{jk}^i(Y)\,
\d\langle Y^j,Y^k\rangle\right)\frac{\partial}{\partial x^i}
$$
where $\Gamma_{jk}^i$ are the Christoffel symbols of the Levi-Civita
connection; if $\d Y^i=\d M^i+\d A^i$ where $M^i$ is a local
martingale and $A^i$ a finite variation process, then
$$
\d_mY=\d M^i\frac{\partial}{\partial x^i}.
$$
Alternatively, if $\dsp Q(Y)\colon T_{Y_0}M\to T_{Y_\tbull}M$ is
the parallel translation along $Y$, then
$$
\d Y_t=Q(Y)_{t}\,\d\left(\int_0^{\raise1pt\hbox{$\displaystyle .$}} Q(Y)_{s}^{-1}
\circ \d Y_s\right)_t
$$
and
$$
\d_mY_t=Q(Y)_{t}\,\d N_t
$$
where $N_t$ is the martingale part of the Stratonovich integral
$\int_0^t Q(Y)_{s}^{-1} \circ dY_s$.

For $x,y\in M$, and $y$ not in the cut-locus of $x$, let
\begin{equation}
\label{E15}
 I(x,y)=\sum_{i=1}^{d-1} \int_0^{\rho(x,y)}\left(|\nabla_{\dot
e(x,y)}J_i|^2+\big\langle R(\dot e(x,y),J_i)J_i+\nabla_{\dot
e(x,y)}Z,\dot e(x,y)\big\rangle\right)_s\,\d s
\end{equation}
where $\dot e(x,y)$ is the tangent vector of the unit speed minimal
geodesic $e(x,y)$ and $(J_i)_{i=1}^d$ are Jacobi fields along
$e(x,y)$ which together with $\dot e(x,y)$ constitute an
orthonormal basis of the tangent space at $x$ and $y$:
$$
J_i(\rho(x,y))=P_{x,y}J_i(0),\quad i=1,\ldots,d-1;
$$
here $P_{x,y} \colon T_{x}M\to T_{y}M$ is the parallel translation along
the geodesic $e(x,y)$.

Let $c\in {]0,1[}$.
For $h>0$ but smaller than the injectivity radius of $D$, and $t>0$,
let $X^h$ be the semimartingale satisfying
$X_0^h=\varphi(h)$ and
\begin{equation}
\label{E14} \d X_s^h=P_{X_s,X_s^h}\d_mX_s+Z(X^h_s)\,\d s+ \xi_s^h\d
s,\end{equation} where
$$\xi_s^h:=\left(\frac{h}{ct}+\kappa h\right)n(X_s^h,X_s)
$$
with $n(X_s^h,X_s)$ the derivative at time $0$ of the unit speed
geodesic from $X_s^h$ to $X_s$, and $P_{X_s, X_s^h}\colon T_{X_s}M\to
T_{X_s^h}M$ the parallel transport along the minimal geodesic
from $X_s$ to $X_s^h$. By convention, we put $n(x,x)=0$ and
$P_{x,x}=\text{Id}$ for all $x\in M$.

By the second variational formula and (\ref{**0}) (cf.~\cite{ATW}),
we have
$$\d\rr(X_s, X_s^h)\le \left\{I(X_s, X_s^h) - \ff {h}{ct}-\kappa
h\right\}\d s\le \ -\ff{h}{ct}\,\d s,\quad s\le T_h,$$
where $T_h:=\inf\{s\ge 0: X_s=X_s^h\}$.
Thus, $(X_s, X_s^h)$ never reaches the cut-locus.
In particular, $T_h\le ct$ and
\begin{equation}\label{3.4*}
X_s= X_s^h,\quad s\ge ct.\end{equation}
Moreover,
we have $\rr(X_s, X_s^h)\le h$ and 
\begin{equation}\label{*L}
|\xi_s^h|^2 \le h^2 \Big(\kappa + \ff 1
{ct}\Big)^2.\end{equation}
We want to compensate the additional drift of $X^h$ by a change of probability.
To this end, let
$$
M_s^h=-\int_0^{s\wedge ct}\big\langle \xi_r^h,
P_{X_r,X_r^h}\,\d_mX_r\big\rangle,
$$
 and
$$
R_s^h=\exp\left(M_s^h-\frac12[M^h]_s\right).
$$
Clearly $R^h$ is a martingale, and under $\Q^h=R^h\cdot \P$,
the process $X^h$
is a diffusion with generator~$L$.

Letting $\tau(x_0)$ (resp. $\tau^h$) be the hitting time of
$\partial D$ by $X$ (resp. by $X^h$), we have
$$
1_{\{t<\tau^h\}}\le 1_{\{t<\tau(x_0)\}}+1_{\{\tau(x_0)\le
  t<\tau^h\}}.
$$
But, since $X_s^h=X_s$ for $s\ge ct$, we obtain
$$
1_{\{\tau(x_0)\le
  t<\tau^h\}}=1_{\{\tau(x_0)\le
  ct\}}1_{\{
  t<\tau^h\}}.
$$
 Consequently,
\begin{align*}
\frac{1}{h}\left(P_t^Df(\varphi(h))-P_t^Df(x_0)\right)
&=\frac{1}{h}\,\EE\left[f(X^h_t)R_t^h1_{\{t<\tau^h\}}-f(X_t(0))1_{\{t<\tau(x_0)\}}\right]\\
&\le \frac{1}{h}\,\EE\left[f(X^h_t)R_t^h1_{\{t<\tau(x_0)\}}-f(X_t(0))1_{\{t<\tau(x_0)\}}\right]\\
&\qquad+\frac{1}{h}\,\EE\left[f(X^h_t)R_t^h1_{\{\tau(x_0)\le
  ct\}}1_{\{
  t<\tau^h\}}\right],
\end{align*}
and since $X_t^h=X_t$ this yields
\begin{align}
\label{E4}
\begin{split}
\frac{1}{h}\left(P_t^Df(\varphi(h))-P_t^Df(x_0)\right)
&\le\EE\left[f(X_t)1_{\{t<\tau(x_0)\}}\frac{1}{h}(R_t^h-1)\right]\\
&\qquad +\frac{1}{h}\,\EE\left[f(X^h_t)R_t^h1_{\{\tau(x_0)\le ct\}}
1_{\{t<\tau^h\}}\right].
\end{split}
\end{align}

The left hand side converges to the quantity to be evaluated as $h$ goes
to $0$. Hence, it is enough to find appropriate $\limsup$'s for the two
terms of the right hand side. We begin with the first term. Letting
$$
Y_s^h=\left|M_s^h -\frac12[M^h]_s\right|
$$
and noting that
$\langle n(X_r^h, X_r), P_{X_r, X_r^h}\d_m X_r\rangle=\sqrt 2\,\d b_r$
up to the coupling time $T_h$ for some one-dimensional Brownian
motion $b_r$, we have
\begin{align*}
R_t^h&=\exp\left(M_t^h-\frac12[M^h]_t\right) \le
1+M_t^h-\frac12[M^h]_t+(Y_t^h)^2\exp(Y_t^h)\\
&=1+M_t^h-\int_0^t |\xi_s^h|^2\d s +(Y_t^h)^2\exp(Y_t^h).
\end{align*}   From the assumptions,
$\exp(Y_t^h)$ and $Y_t^h/h$
have all their moments bounded, uniformly in $h>0$. Consequently,
since $f$ is bounded,
\begin{align*}
\limsup_{h\to0}\EE\left[f(X_t)1_{\{t<\tau(x_0)\}}\frac{1}{h}\left(\int_0^t|\xi_r^h|^2\,dr
+(Y_t^h)^2\exp(Y_t^h)\right)\right]=0,
\end{align*}
which implies
\begin{align*}
\limsup_{h\to 0}\EE&\left[f(X_t)1_{\{t<\tau(x_0)\}}\frac{1}{h}(R_t^h-1)\right]\\
&\le\limsup_{h\to0}\EE\left[f(X_t)1_{\{t<\tau(x_0)\}}\frac1h\int_0^s\big\langle
\xi_r^h,P_{X_r, X_r^h}\,\d_m X_r\big\rangle\right].
\end{align*}
Using Lemma \ref{L2.5} and estimate (\ref{*L}), we have for $\delta>0$
\begin{align*}
\EE&\left[f(X_t)1_{\{t<\tau(x_0)\}}\frac1h\int_0^s\big\langle
\xi_r^h,P_{X_r, X_r^h}\d_m X_r\big\rangle\right]\\
&\le \delta
P_t^D\left(f\log\left(\frac{f}{P_t^Df(x_0)}\right)\right)(x_0)\\
&\quad+\delta
P_t^Df(x_0)\log\EE\left[1_{\{t<\tau(x_0)\}}\exp\left(\frac{1}{\delta h
}\int_0^{ct}\big\langle
   \xi_s^h, P_{X_s, X_s^h}\d_mX_s\big\rangle\right)\right]\\
&\le \delta
P_t^D\left(f\log\left(\frac{f}{P_t^Df(x_0)}\right)\right)(x_0)\\
&\quad+\delta
P_t^Df(x_0)\log\EE\left[\exp\left(\frac{1}{\delta^2h^2}\int_0^{ct}\left|
   \xi_s^h\right|^2 \,\d s\right)\right]\\
&\le \delta
P_t^D\left(f\log\left(\frac{f}{P_t^Df(x_0)}\right)\right)(x_0)
+\delta P_t^Df(x_0)\frac{ct}{\delta^2}\left(\frac{1}{c^2t^2}+\kappa^2\right)\\
&\le \delta
P_t^D\left(f\log\left(\frac{f}{P_t^Df(x_0)}\right)\right)(x_0)
+\frac{C'}{c\delta t}P_t^Df(x_0),
\end{align*}
where $\displaystyle C'=1+(c\kappa)^2$ (recall that $t\le 1$).
Since the last expression is independent of~$h$, this proves that
\begin{align}
\limsup_{h\to0}\EE&\left[f(X_t)1_{\{t<\tau(x_0)\}}\frac{1}{h}(R_t^h-1)\right]\notag \\
&\le\delta
P_t^D\left(f\log\left(\frac{f}{P_t^Df(x_0)}\right)\right)(x_0)
+\frac{C'}{c\delta t}P_t^Df(x_0).
\label{E21}
\end{align}
We are now going to estimate $\limsup$ of the second term in~\eqref{E4}.
By the strong Markov property, we
have
\begin{align}
\EE\left[f(X^h_t)R_t^h1_{\{\tau(x_0)\le
   ct\}}1_{\{t<\tau^h\}}\right]
&=\EE_{\Q^h}\left[P_{t-ct}^Df(X^h_{ct})1_{\{\tau(x_0)\le
   ct<\tau^h\}}\right]\notag \\
&\le \|P_{t-ct}^Df\|^{\mathstrut}_\infty \,\Q^h\big\{\tau(x_0)\le
   ct<\tau^h\big\}.
\label{E9}
\end{align}
Since $\displaystyle\rho(X_s^h,X_s)\le
h\frac{ct-s}{ct}$ for $s\in [0,ct]$, we have on $\{\tau(x_0)\le ct<\tau^h\}$:
$$\rho_{\partial D}(X^h_{\tau(x_0)})\le h\frac{ct-\tau(x_0)}{ct}.$$
For $s\in [0,\tau^h-\tau(x_0)]$, define
$$
Y'_s=\rho(X_{\tau(x_0)+s}^h,\partial D),
$$
and for fixed small $\ee>0$ (but $\ee>h$), let
$S'=\inf\{s\ge 0,\
Y'_s=\ee\ \text{ or }\ Y_s'=0\}$. Since under $\mathbb Q^h$ the
process $X_s^h$ is generated by $L$, the drift of
$\rr(X_s^h, \pp D)$ is $L\rr(\newdot,\pp D)$ which is bounded in a neighborhood of
$\pp D$.
Thus, for a sufficiently small $\vv>0$,
there exists a $\Q^h$-Brownian motion $\beta$ started at $0$, and a constant
 $N>0$ such that
$$Y_s:=h\,\frac{ct-\tau(x_0)}{ct}+\sqrt 2 \beta_s+Ns\ge Y_s',\quad s\in [0,S'].$$
Let
$$
S=\inf\big\{u\ge 0,\ Y_u=\ee\ \text{ or }\  Y_u=0\big\}.
$$
Taking into account that on $\{\tau(x_0)=u\}$,
$$
\{Y'_{S'}=\ee\}\cup\{S'>ct-u\}\subset \{Y_S=\ee\}\cup\{S>ct-u\},
$$
we have for $u\in [0,ct]$,
\begin{align*}
\Q^h\big\{ct<\tau^h\vert \tau(x_0)=u\big\}&\le
\Q^h\big\{Y_{S'}=\ee\vert \tau(x_0)=u\big\}
+\Q^h\big\{S'\ge ct-u\vert \tau(x_0)=u\big\}\\
&\le
\Q^h\big\{Y_{S}=\ee\vert \tau(x_0)=u\big\}
+\Q^h\big\{S\ge ct-u\vert \tau(x_0)=u\big\}\\
&\le \Q^h\big\{Y_{S}=\ee\vert \tau(x_0)
=u\big\}+\frac{1}{ct-u}\EE_{\Q^h}\big[S\vert \tau(x_0)=u\big].
\end{align*}
Now using the fact that $\e^{-NY_s}$ is a martingale and $Y_s^2-2s$
a submartingale, we get
$$
\Q^h\left\{Y_{S}=\ee\vert
\tau(x_0)=u\right\}=\frac{1-\e^{-Nh\frac{ct-u}{ct}}}{1-\e^{-N\ee}}\le C_1h
$$
and
\begin{align*}
\EE_{\Q^h}\big[S\vert \tau(x_0)=u\big]&\le \EE_{\Q^h}\big[Y_S^2\vert
  \tau(x_0)=u\big]\\
&\le \ee^2\,\Q^h\big\{Y_{S}=\ee\vert
  \tau(x_0)=u\big\}\\
&=\ee^2\frac{1-\e^{-Nh\frac{ct-u}{ct}}}{1-\e^{-N\ee}}\le C_2\,\frac{h(ct-u)}{ct}
\end{align*} for some constants $C_1, C_2>0$.
Thus,
\begin{align*}
\Q^h\big\{ct<\tau^h\vert \tau(x_0)=u\big\}
&\le C_1h+\frac{1}{ct-u}\,C_2\,\frac{h(ct-u)}{ct}\\
&\le C_1h+C_3\frac{h}{ct}\le C_4\frac{h}{t}
\end{align*}
for some constants $C_3, C_4>0$ (recall that $t\le 1$).
Denoting by $\ell^h$ the density of $\tau(x_0)$ under
$\Q^h$, this implies
\begin{align*}
\Q^h\big\{\tau(x_0)\le
    ct<\tau^h\big\}&=\int_0^{ct}\ell^h(u)\,\Q^h\{ct<\tau^h\vert\si^h=u\}\,\d u\\
&\le C_4\frac{h}{t }\int_0^{ct}\ell^h(u)\, \d u\\
&= C_4\frac{h}{t }\,\Q^h\big\{\tau(x_0)\le ct\big\}.
\end{align*}
In terms of $D^{-h}=\{x\in D,\ \rho_{\partial D}(x)>h\}$ and
$\sigma^h=\inf\{s>0,\ X_s^h\in\partial D^{-h}\}$, we have
$\sigma^h\le \tau(x_0)$ a.s. Hence, by Lemma \ref{L2.3},
$$
\Q^h\big\{\tau(x_0)\le ct\big\}
\le \Q^h\big\{\sigma^h\le ct\big\}
\le C\exp\left\{-\frac{\rho_{\partial D^{-h}}(\varphi(h))}{16ct}\right\},
$$
where we used that $X^h_s$ is generated by $L$ under $\Q^h$.
This implies
\begin{equation}
\label{E10}
\Q^h\left\{\tau(x_0)\le ct<\tau^h\right\}
\le C_5\frac{h}{t}\exp\left\{-\frac{\rho_{\partial D^{-h}}(\varphi(h))}{16ct}\right\}.
\end{equation}
Since $\displaystyle
\frac{1}{h}\left(P_t^D(\varphi(h))-P_t^D(x_0)\right)$ converges to
$|\nn P_t^Df(x_0)|$, we obtain from~\eqref{E4}, \eqref{E21},
\eqref{E9} and \eqref{E10},
\begin{align}
|\nn P_t^Df(x_0)|&\le \delta
P_t^D\left(f\log\left(\frac{f}{P_t^Df(x_0)}\right)\right)(x_0)\notag\\
&\quad
+\frac{C'}{c\delta t}\,P_t^Df(x_0)+C_5\,\|P_{t-ct}^Df\|_\infty^{\mathstrut}\frac{1}{t}
\exp\left\{{-\frac{\rho_{\partial D}(x_0)}{16ct}}\right\}.
\label{E5}
\end{align}
Finally, as explained in steps c) and d) of the proof of
Proposition \ref{Pr2.5}, for any compact set $K\subset D$, there exists a constant
$C(K,D)>0$ such that
$$ \|P_{t-ct}^Df\|_\infty^{\mathstrut}\le
\e^{C(K,D)/t}P_t^Df(x_0),\quad c\in [0,1/2],\ x_0\in K,\ t\in {]0,1}].
$$
Combining this with (\ref{E5}), we arrive at
\begin{align}
\label{E6}
\begin{split}
|\nn P_t^Df(x_0)|&\le \delta
P_t^D\left(f\log\left(\frac{f}{P_t^Df(x_0)}\right)\right)(x_0)
+\frac{C'}{c\delta t}P_t^Df(x_0)\\
&\quad+C_5 \frac{1}{t}\exp\left\{-\frac{\rho_{\partial D}(x_0)}{16ct}\right\}\,
\exp\left\{\frac{C(K,D)} t\right\}P_t^Df(x_0).
\end{split}
\end{align}
Finally, choosing $c$ such that
$$0<c< \ff 1 2\land \ff{\text{dist}(K,\pp D)}{16 C(K,D)},$$
we get for some
constant $C>0$,
\begin{align}
\label{E22}
 |\nn P_t^Df(x_0)|\le \delta
P_t^D\left(f\log\left(\frac{f}{P_t^Df(x_0)}\right)\right)(x_0)
+C\left(\frac{1}{\delta t}+1\right)\,P_t^Df(x_0),&\\ x_0\in K,\ \dd>0,&\notag
\end{align} 
which implies the desired inequality.

To finish we consider the case $t> 1$. From the semigroup property, we have $P_t^Df=P_1^D(P_{t-1}^Df)$. So letting $g=P_{t-1}^Df$ and applying~\eqref{E22} to $g$ at time~$1$, we obtain 
$$
|\nn P_t^Df(x_0)|\le \delta
P_1^D\left(g\log\left(\frac{g}{P_1^Dg(x_0)}\right)\right)(x_0)
+C\left(\frac{1}{\delta }+1\right)\,P_1^Dg(x_0).
$$ 
Now using $P_1^Dg=P_t^Df$, we get 
$$
|\nn P_t^Df(x_0)|\le \delta
P_1^D(g\log g)(x_0)-P_t^Df(x_0)\log P_t^Df(x_0)
+C\left(\frac{1}{\delta }+1\right)\,P_t^Df(x_0).
$$ 
Letting $\varphi(x)=x\log x$, we have for $z\in D$
\begin{align*}
g\log g(z)&=\varphi \left(\EE\left[f(X_{t-1}(z))1_{\{t-1<\tau(z)\}}\right]\right)\\
&\le \EE\left[\varphi\left(f(X_{t-1}(z))1_{\{t-1<\tau(z)\}}\right)\right]\\
&=\EE\left[\varphi(f)(X_{t-1}(z))1_{\{t-1<\tau(z)\}}\right]\\
&=P_{t-1}^D(f\log f)(z),
\end{align*}
where we successively used the convexity of $\varphi$ and the fact that  $\varphi (0)=0$. This implies 
$$
|\nn P_t^Df(x_0)|\le \delta
P_t^D\left(f\log\left(\frac{f}{P_t^Df(x_0)}\right)\right)(x_0)
+C\left(\frac{1}{\delta }+1\right)\,P_t^Df(x_0),
$$
which is the desired inequality for $t>1$.
\end{proof}

\section{Proof of Theorems \ref{T1.1} and Theorem \ref{T1.2} }
\label{sec:4}

\begin{proof}[of Theorem \ref{T1.1}\/] \ 
 We assume that $t\in ]0,1[$ and refer to the end of the proof of Proposition~\ref{P3.1} for the case $t>1$. 
Fixing $\dd>0$ and $x_0\in
M$, we take $R= 160/(\dd\wedge1)$. Let $D$ be a relatively compact
open domain with $C^2$ boundary containing $B(x_0,2R)$ and contained
in $B(x_0, 2R+\ee)$ for some small $\ee>0$.
By the countable compactness of
$M$, it suffices to prove that there exists a constant $C=C(D)$ such
that (\ref{1.3}) holds on $B(x_0,R)$ with $C$ in place of $F(\dd\wedge 1,x_0)$. We now fix $x\in B(x_0,R)$, $t\in
{]0,1]}$ and $f\in \scr B_b^+$. Without loss of generality, we may and
will assume that $P_t f(x)=1$.\smallskip

(a) Let $P_s(x,\d y)$ be the transition kernel of the $L$-diffusion
process, and for $x\in D$, $z\in M$, let
$$\nu_s(x,\d z)= \int_{\pp D}h_x(s/2,y)\, P_{t-s}(y,\d z)\,\nu(\d y),$$
where $\nu$
is the measure on $\pp D$ induced by $\mu(\d y)= \e^{V(y)}\d y$.
By Lemma \ref{L2.2} we have
$$P_t f(x)= P_t^D f(x) + \int_{{]0,t]}\times
D\times M} p_{s/2}^D(x,y)\, f(z)\, \d s \mu(\d y)\nu_s(y,\d z) .$$
Then
\begin{align}
 |\nn P_t f(x)|&\le |\nn P_t^D f(x)|\notag \\&
\ \ \ +\int_{{]0,t]}\times D\times M}
|\nn\log p_{s/2}^D(\newdot,y)(x)|\,p_{s/2}^D(x,y)f(z)\,\d s\mu(dy)\nu_s(y,\d z)\notag \\
&=: \ I_1+I_2.\label{B1}
\end{align}

(b) By Proposition \ref{P3.1},  we have \begin{equation}\label{B3} I_1\le \dd
P_t^D (f\log f)(x)+\ff{\dd}{\e} +C\left(\ff{1}{\dd t}+1\right),\quad x\in B(x_0, R),\
t\in ]0,1[,\ \dd>0\end{equation} for some $C=C(D)>0$.\smallskip

(c) By Proposition \ref{Pr2.5} with $\vv=1$, we have
\begin{equation}\label{B4} I_2\le\int_{{]0,t]}\times M\times D}
\Big[\ff{C\log(\e +s^{-1})}{\sqrt s }+\ff {2\rr(x,y)} {s}\Big]
p_{s/2}^D(x,y)\,f(z)\,\d s \nu_s(y, \d z)\mu(\d y)\end{equation} for
some $C=C(D)>0$ and all $t\in {]0,1]}$.
Applying Lemma \ref{L2.5} to
the measure $\tilde\mu:=p^D_{s/2}(x,y)\,\d s\,\nu_s(y,\d z)\mu(\d y)$ on
$E:={]0,t]}\times M\times D$ so that
$$\tilde\mu(E)= \PP(\tau(x)\le t<\xi(x)) \le  1,$$ we obtain
\begin{align}
&I_2 \le \dd\,\EE\left[ (f\log
f)(X_t(x))1_{\{\tau(x)\le t<\xi(x)\}}\right]+\ff{\dd}{\e} 
+\dd \EE\left[f(X_t(x))1_{\{\tau(x)\le t<\xi(x)\}}\right]
\notag\\
& \quad\times\log\int_{{]0,t]}\times M\times D}
\exp\left\{\ff{C\log(\e +s^{-1})}{\dd\sqrt s}
+\ff{2\rr(x,y)}{s\dd}\right\}\d s\,p_{s/2}^D(x,y)\nu_s(y,\d z)\,\mu(\d y)\notag\\
&\quad\le \dd \EE\left[ (f\log
f)(X_t(x))1_{\{\tau(x)\le t<\xi(x)\}}\right] +\ff{\dd}{\e} 
+\dd \EE\left[f(X_t(x))1_{\{\tau(x)\le t<\xi(x)\}}\right]\notag\\
&\quad\qquad \times\log\int_{{]0,t]}\times M\times D}
\exp\left\{\ff{A}{\dd}+\ff{9R}{s\dd}\right\} \d s\, p_{s/2}^D(x,y)\nu_s(y,\d z)\,\mu(\d y),
\label{B5}\end{align}
where $$ A:=\sup_{r>0}\big\{C\sqrt r\log(\e+r)-r\big\}<\infty. $$
We get
 \begin{align}
I_2&\le 
\dd \EE\left[ (f\log
f)(X_t(x))1_{\{\tau(x)\le t<\xi(x)\}}\right] +\ff{\dd}{\e}\notag\\
&\qquad + \dd \EE\left[f(X_t(x))1_{\{\tau(x)\le t<\xi(x)\}}\right]\left(\log\EE\big[
\exp\left(9R/\dd\tau(x)\right)\big]+\ff{A}{\dd}\right)
\notag \\
&\le \dd \EE\left[ (f\log
f)(X_t(x))1_{\{\tau(x)\le t<\xi(x)\}}\right] +\ff{\dd}{\e}+ \dd\log\EE\big[
\exp\left(9R/\dd\tau(x)\right)\big]+A\notag \\
& \le \dd \EE\left[ (f\log
f)(X_t(x))1_{\{\tau(x)\le t<\xi(x)\}}\right] \notag \\
&\qquad+ \dd\log\EE\left[
\exp\left(\ff{9R}{(\dd\wedge1)\tau(x)}\right)^{\ff{\dd\wedge1}{\dd}}\right]+A+\ff{\dd}{\e}\notag \\
&=\dd \EE\left[ (f\log f)(X_t(x))1_{\{\tau(x)\le t<\xi(x)\}}\right] \notag \\
&\qquad+ (\dd\wedge1)\log\EE\left[ \exp\left(\ff{9R}{(\dd\wedge1)\tau(x)}\right)\right]+A+\ff{\dd}{\e}
 .\label{B6} \end{align}
By Lemma
\ref{L2.3} and noting that $\rr_{\pp}(x)\ge R$, we have
\begin{equation*}\begin{split} \EE&\left[
\exp\left(\ff{9R}{(\dd\wedge1)\tau(x)}\right)\right]\le
1+\EE\left[\ff{9R}{(\dd\wedge1)\tau(x)}
\exp\left(\ff{9R}{(\dd\wedge1)\tau(x)}\right)\right]\\
&=1+\int_0^\infty
\ff{9Rs}{(\dd\wedge1)}\exp\left(\ff{9Rs}{(\dd\wedge1)}\right)\ff{\d}{\d
s}
\left(-\P\{\tau(x)\le s^{-1}\}\right)\,\d s\\
&= 1+\ff{9R}{(\dd\wedge1)}\int_0^\infty
\left(\ff{9R}{(\dd\wedge1)}s+1\right)\exp\left(\ff{9Rs}{(\dd\wedge1)}\right)
\P\{\tau(x)\le s^{-1}\}\,\d s\\
&\le 1+\ff{9R}{(\dd\wedge1)}\int_0^\infty
\left(\ff{9R}{(\dd\wedge1)}s+1\right)\exp\left(\ff{9Rs}{(\dd\wedge1)}\right)
\exp\left(\ff{-R^2s}{16}\right)\,\d s\\
&=1+\ff{9R}{(\dd\wedge1)}\int_0^\infty
\left(\ff{9R}{(\dd\wedge1)}s+1\right)\exp\left(\ff{-Rs}{(\dd\wedge1)}\right)\,\d s\\
&=1+9\int_0^\infty \left(9u+1\right)\exp\left(-u\right)\,du=:A',
\end{split}\end{equation*}
since $R=160/(\dd\wedge1)$. This along with (\ref{B6}) yields
\begin{equation}
\label{newlabel}
I_2\le\dd\,\EE\left[ (f\log f)(X_t(x))1_{\{\tau(x)\le t<\xi(x)\}}\right] + \log
A'+A+\ff{\dd}{\e}\,.
\end{equation}
The  proof is completed by
combining (\ref{newlabel}) with (\ref{B1}), (\ref{B3}) and (\ref{B5}).
\end{proof}

\begin{proof}[of Theorem \ref{T1.2}\/] \ By Theorem
\ref{T1.1},
\begin{align}
 |\nn P_t f(x)|&\le \dd \big(P_t
(f\log f)(x) - (P_t f)(x)\log P_t f(x)\big)\notag\\
&\quad + \left(F(\dd\wedge 1,x)\left(\ff {1} {\dd (t \wedge 1)} +1\right)+\ff{2\dd}{e}\right) P_t f(x),\quad
\dd>0,\ x\in M.
\label{A30} \end{align}
For $\aa>1$ and $x\ne y$, let $\bb(s)=
1+ s(\aa-1)$ and let $\gg\colon [0,1]\to M$ be the minimal geodesic from $x$
to $y$. Then $|\dot \gg|=\rr(x,y)$.
Applying (\ref{A30}) with $\dd=
\ff{\aa-1}{\aa \rr(x,y)}$, we obtain
\begin{equation*}\begin{split} &\ff{\d}{\d s} \log (P_t
f^{\bb(s)})^{\aa/\bb(s)}(\gg_s) \\
&= \ff{\aa (\aa-1)}{\bb(s)^2}\,\ff{P_t (f^{\bb(s)}\log
f^{\bb(s)}) -(P_t f^{\bb(s)})\log P_t f^{\bb(s)}}{P_t
f^{\bb(s)}}(\gg_s) \\ &\qquad + \ff\aa {\bb(s)}\, \ff{\langle\nn P_t
f^{\bb(s)},\dot\gg_s\rangle}{P_t f^{\bb(s)}}(\gg_s)\\
&\ge \ff{\aa \rr(x,y)}{\bb(s) P_t f^{\bb(s)}(\gg_s)} \bigg\{
\ff{\aa-1}{\aa\rr(x,y)} \Big(P_t (f^{\bb(s)}\log f^{\bb(s)}) -(P_t
f^{\bb(s)})\log P_t f^{\bb(s)}\Big)(\gg_s)\\
&\qquad- |\nn P_t f^{\bb(s)}(\gg_s)|\bigg\}\\
&\ge
-F\left(\ff{\aa-1}{\aa\rr(x,y)}\wedge 1,\gg_s\right)\left(\ff{\aa^2\rr^2(x,y)}{\beta(s)(\aa-1)(t\wedge 1)}
+\ff{\aa\rr(x,y)}{\bb(s)}\right)-\ff{2(\aa-1)}{\e\bb(s)}\\&\ge
-C(\aa,x,y)\left(\ff{\aa\rr^2(x,y)}{(\aa-1)(t\wedge 1)}+\rr(x,y)\right)-\ff{2(\aa-1)}{\e}
\end{split}
\end{equation*}
where
$C(\aa,x,y):= \sup_{s\in [0,1]}\ff1{\aa}F\left(\ff{\aa-1}{\aa\rr(x,y)}\wedge 1,\gg_s\right)$. This implies the
desired Harnack inequality.

Next, for fixed $\aa\in ]1,2[$, 
let
$$K(\aa,t,x)= \sup\big\{C(\aa, x,y):\ y\in B(x,\sqrt {2 t})\big\},\quad t>0,\
x\in M.$$
Note $K(\aa,t,x)$ is finite and continuous in $(\aa,t,x)\in ]1,2[\times ]0,1[\times M$.
Let $p:=2/\aa$.
For fixed $t\in ]0,1[$,  the Harnack inequality  gives for $y\in B(x,\sqrt{2t})$,
$$(P_t f(x))^2\le (P_t f^\aa(y))^p \exp\left\{\ff{2(2-p)}{\e}+ 2 K(\aa,t,x) \left(\ff
{2\aa}{\aa-1} +\sqrt{2t}\right)\right\}.$$
Then choosing $T>t$ such that $q:=
p/2(p-1)< T/t$,
\begin{equation*}\begin{split}  & \mu\big(B(x,\sqrt{2
t})\big)\exp\left\{-\ff{2(2-p)}{\e}-2 K(\aa,t,x) \left(\ff{2\aa}{\aa-1}+\sqrt{ 2 t}\right)
-\ff{t}{T-qt}\right\} (P_t f(x))^2\\
&\le \int_{B(x,\sqrt{2t})} (P_t f^\aa(y))^p
\exp\left\{-\ff{\rr(x,y)^2}{2(T-qt)}\right\}\mu(\d
y).\end{split}\end{equation*}
Similarly to the proof of
\cite[Corollary 3]{ATW}, we obtain that for any $\dd>2$, choosing $\aa=\ff{2\dd}{2+\dd}\in]1,2[$ such that  $ \dd>\ff{2}{2-\aa}=\ff{p}{p-1}>2$,
 there is a constant $c(\dd)>0$ such that the following estimate holds:
\begin{equation*}\begin{split} E_\dd(x,t)&:= \int_M p_t(x,y)^2
\exp\left\{\ff{\rr(x,y)^2}{\dd t}\right\}\mu(\d y)\\
&\le \ff{\exp\left\{c(\dd)K(\aa,t,x)(1+\sqrt{2t})\right\}}{\mu(B(x,\sqrt{2t})},\
\quad t>0,\ x\in M.
\end{split}\end{equation*}
By \cite[Eq. (3.4)]{G}, this implies the desired heat kernel
upper bound for $C_\dd(t,x):= c(\dd)K(\aa,t,x)(1+\sqrt{2 t})$.
\end{proof}

\section{Appendix}
\label{sec:5}

The aim of the Appendix is to explain that the arguments in
Souplet-Zhang \cite{SZ} and Zhang \cite{Zhang} for gradient
estimates of solutions to heat equations work as well in the case with
drift.

\begin{thm} \label{TA1} Let $L= \DD +Z$ for a $C^1$ vector field $Z$.
Fix $x_0\in M$ and $R,\ T,\ t_0>0$ such that $B(x_0,R)\subset M$. Assume
that
\begin{equation}\label{A0}\Ric - \nn Z\ge -K\end{equation}
on $B(x_0,R)$.
There exists a constant $c$ depending only on $d$, the dimension of the manifold,
such that for any positive solution $u$ of
\begin{equation}\label{A1} \pp_t u= Lu \end{equation} on $Q_{R,T} :=
B(x_0,R)\times [t_0-T, t_0]$, the estimate
$$|\nn \log u|\le c \Big(\ff 1 R +T^{-1/2}+\sqrt K\Big) \Big(1+ \log
\ff{\sup_{Q_{R,T}}u} u\Big)$$ holds on $Q_{R/2, T/2}$.
\end{thm}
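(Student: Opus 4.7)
The plan is to adapt the argument of Souplet--Zhang \cite{SZ} to the case with drift; the only essential change occurs in the Bochner--Weitzenb\"ock formula, where the Ricci curvature is replaced by the Bakry--\'Emery tensor $\Ric - \nn Z$, which is precisely the quantity bounded below in~(\ref{A0}).

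After rescaling I would assume $M := \sup_{Q_{R,T}} u = 1$, then set $f := \log u \le 0$ and $w := 1 - f \ge 1$, so the desired estimate reduces to $|\nn f|/w \le c(R^{-1}+T^{-1/2}+\sqrt{K})$ on $Q_{R/2,T/2}$. The quantity I would control is $\omega := |\nn f|^2/w^2$. From $\pp_t u = Lu$ I would deduce $\pp_t f = Lf + |\nn f|^2$, and the drift Bochner identity
\begin{equation*}
L|\nn f|^2 = 2|\Hess f|^2 + 2\langle \nn Lf, \nn f\rangle + 2(\Ric - \nn Z)(\nn f,\nn f)
\end{equation*}
combined with~(\ref{A0}) yields
\begin{equation*}
(L-\pp_t)|\nn f|^2 \ge 2|\Hess f|^2 - 2K|\nn f|^2 - 2\langle \nn|\nn f|^2, \nn f\rangle.
\end{equation*}
Using $(L-\pp_t)w = |\nn f|^2$ together with the chain rule for $\omega = |\nn f|^2/w^2$, and absorbing the Hessian via Kato's inequality, I expect to reach a Souplet--Zhang-type differential inequality of the form $(L-\pp_t)\omega \ge 2\omega^2 - 2K\omega + \Phi(f,w,\nn\omega,\nn f)$ on $Q_{R,T}$, where $\Phi$ is a first-order term in $\nn\omega$ that will vanish at an interior maximum of $\phi\omega$.

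The estimate would then be extracted by a cutoff/maximum principle argument. I would introduce a standard Li--Yau cutoff $\phi=\phi(d(x_0,\cdot),t)$ supported in $Q_{R,T}$, equal to $1$ on $Q_{R/2,T/2}$, satisfying $|\nn\phi|^2/\phi \le c/R^2$, $|\pp_t\phi|\le c/T$, and $L\phi \ge -c(R^{-2}+R^{-1}\sqrt{K})$ on $Q_{R,T}$. The lower bound on $L\phi$ relies on the Laplacian comparison theorem for the drift operator $L$ under the Bakry--\'Emery condition~(\ref{A0}); this is the only place where the drift affects the cutoff construction. Let $(x^*,t^*)$ be a maximum point of $G:=\phi\omega$ on $\bar Q_{R,T}$. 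Either the maximum lies on the parabolic boundary, in which case the bound on $Q_{R/2,T/2}$ is trivial, or else $\nn G=0$, $\pp_t G\ge 0$ and $LG\le 0$ at $(x^*,t^*)$, giving $\nn\omega=-\omega\nn\phi/\phi$. Substituting this back into the differential inequality for $\omega$ and using the cutoff bounds, I expect a quadratic inequality of the form $\phi\omega^2\le c(R^{-2}+T^{-1}+K)w^2(\phi\omega)$ at $(x^*,t^*)$, hence $G \le c(R^{-2}+T^{-1}+K)\sup_{Q_{R,T}}w^2$ everywhere on $\bar Q_{R,T}$. Since $\phi\equiv 1$ on $Q_{R/2,T/2}$ and $w\ge 1$, the claimed bound on $|\nn f|/w$ follows.

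The main obstacle will be the bookkeeping in the cutoff step: the cross-terms arising from expanding $L(\phi\omega)$ must be balanced against the leading dissipative term $2\phi\omega^2$ via repeated Cauchy--Schwarz and Young inequalities, with careful tracking of the factors of $w\ge 1$ and of the sign of $f-1\le 0$ appearing in the $\nn\omega$-term of the inequality for $\omega$. The drift field $Z$ itself introduces no new difficulty, since it enters Bochner's identity and the Laplacian comparison bound for $L\phi$ solely through the tensor $\Ric-\nn Z$, which is precisely the object controlled by~(\ref{A0}).
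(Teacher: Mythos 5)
Your high-level plan is the same as the paper's: adapt the Souplet--Zhang argument to the drift case, with the Bakry--\'Emery tensor $\Ric-\nn Z$ replacing $\Ric$ in Bochner's formula and the generalized Laplacian comparison for $L$ replacing the ordinary one in the cutoff computation. The drift Bochner identity you state, and the deduction $(L-\pp_t)|\nn f|^2 \ge 2|\Hess f|^2 - 2K|\nn f|^2 - 2\langle\nn|\nn f|^2,\nn f\rangle$ from~(\ref{A0}), are correct and are just a compact repackaging of the paper's displays (\ref{A2})--(\ref{A4}).

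There is, however, a genuine gap in the cutoff step, and your own final bound shows it. You record the differential inequality as $(L-\pp_t)\omega\ge 2\omega^2-2K\omega+\Phi$, with coefficient $2$ on $\omega^2$, whereas the paper's (\ref{A4}) (following \cite{SZ}, Eq.~(2.9)) has $2(1-f)\omega^2$. That factor $1-f=w$ is not cosmetic. At the interior maximum of $\phi\omega$ one has $\nn\omega=-\omega\nn\phi/\phi\ne 0$ (so $\Phi$ does \emph{not} vanish there, contrary to your first sentence about it), and the cross-term $\Phi=\ff{2f}{1-f}\langle\nn f,\nn\omega\rangle$ must be absorbed by Young's inequality into the $\omega^2$-term. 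Writing $|\nn f|=(1-f)\sqrt\omega$, the cross-term is of size $(1-f)\omega^{3/2}|\nn\phi|/\phi$; when the dissipative term is $2(1-f)\phi\omega^2$, absorption leaves a residual proportional to $(1-f)\omega|\nn\phi|^2/\phi$, and after dividing the whole inequality by $1-f\ge 1$ one gets $\phi\omega^2\lesssim(R^{-2}+T^{-1}+K)\omega$ with \emph{no} $w$-dependence, hence $\phi\omega\lesssim R^{-2}+T^{-1}+K$, as required. With only $2\omega^2$ the residual carries $(1-f)^2=w^2$ instead, and this is exactly the spurious $w^2$ you then write in $\phi\omega^2\le c(\ldots)w^2(\phi\omega)$. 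The conclusion you draw, $G\le c(\ldots)\sup_{Q_{R,T}}w^2$, gives on $Q_{R/2,T/2}$ only $|\nn f|/w\le c(\ldots)^{1/2}\sup_{Q_{R,T}}w$, which depends on $\inf u$ and is strictly weaker than the theorem's $|\nn f|/w\le c(\ldots)$. Your final sentence, ``the claimed bound on $|\nn f|/w$ follows,'' is therefore a non sequitur as written.

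The missing $1-f$ comes from the Hessian term, which you propose to ``absorb via Kato's inequality'' but in fact discard too early. One must keep $\ff{2|\Hess f|^2}{(1-f)^2}\ge \ff{\langle\nn|\nn f|^2,\nn f\rangle^2}{2|\nn f|^4(1-f)^2}$ and complete a square with the $\langle\nn|\nn f|^2,\nn f\rangle$-terms coming from the chain rule for $\omega=|\nn f|^2/(1-f)^2$: doing so upgrades the coefficient of $\omega^2$ from $-2f$ to $2(1-f)$ and simultaneously produces the coefficient $\ff{2f}{1-f}$ on the cross-term, which is bounded in absolute value by $2$ since $f\le 0$. This is precisely the content of the paper's (\ref{A4}) and the step your sketch skips.
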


\begin{proof} Without loss of generality, let $N:=\sup_{Q_{T,R}}u=1$;
otherwise replace $u$ by $u/N$.
Let $f= \log u$ and
$\omega=\ff{|\nn f|^2}{(1-f)^2}$. By (\ref{A1}) we have
$$Lf +|\nn f|^2 -\pp_t f=0$$ so that
\begin{equation}\label{A2} \begin{split} \pp_t \omega &= \ff{2\langle\nn f,\nn \pp_t
f\rangle}{(1-f)^2} + \ff{2\,|\nn f|^2\pp_t f}{(1-f)^3}\\
&= \ff{2\langle\nn f,\nn (L f +|\nn f|^2)\rangle}{(1-f)^2} 
+ \ff{2\,|\nn f|^2(Lf +|\nn f|^2)}{(1-f)^3}\\
&= \ff{2\langle\nn f,\nn (\DD f +|\nn f|^2)\rangle}{(1-f)^2} 
+ \ff{2\,|\nn f|^2(\DD f +|\nn f|^2)}{(1-f)^3}\\
&\qquad +\ff{2\langle\nn_{\nn f}Z,\nn f\rangle+ 2 \Hess_f (\nn
f,Z)}{(1-f)^2} + \ff{2\,|\nn f|^2\langle Z,\nn
f\rangle}{(1-f)^3}.\end{split}\end{equation} Moreover,
\begin{equation}\label{A3} \begin{split} L\omega &=\DD \omega + \ff{\langle Z,\nn
|f|^2\rangle}{(1-f)^2} + \ff{2 |\nn f|^2 \langle Z, \nn f\rangle}{(1-f)^3}\\
&=\DD\omega + \ff{2\,\Hess_f(\nn f,Z)}{(1-f)^2} +\ff{2\,|\nn
f|^2\langle Z,\nn f\rangle}{(1-f)^3}.\end{split}\end{equation}
Finally, by the proof of \cite[(2.9)]{SZ} with $-k$ replaced by
$\Ric (\nn f,\nn f)/|\nn f|^2$, we obtain
\begin{equation}\label{A4}
\begin{split} \DD\omega &-\left\{\ff{2\,\langle\nn f,\nn (\DD f
+|\nn f|^2)\rangle}{(1-f)^2} + \ff{2\,|\nn f|^2(\DD f +|\nn
f|^2)}{(1-f)^3}\right\}\\
&\ge \ff{2f}{1-f}\langle\nn f,\nn\omega\rangle
+2(1-f)\omega^2 +\ff{2\omega\, \Ric(\nn f,\nn f)}{|\nn f|^2}.
\end{split}\end{equation}
Combining (\ref{A0}),
(\ref{A2}), (\ref{A3}) and (\ref{A4}), we arrive at
$$L\omega-\pp_t\omega
\ge \ff{2f}{1-f}\langle\nn f,\nn\omega\rangle + 2 (1-f)\omega^2-2K\omega.$$
This implies the desired estimate by the Li-Yau cut-off
argument as in \cite{SZ}; the only difference is, using the notation
in \cite{SZ}, in the calculation of $-(\DD\psi)\omega $
after Eq.~(2.13) in \cite{SZ}. By (\ref{A0}) and the generalized Laplacian
comparison theorem (see \cite[Theorem 4.2]{BQ}), we have
$$L r \le \sqrt{Kd} \coth \big(\sqrt{K/d}\, r\big) \le \ff d r
+\sqrt{Kd},$$
and then
$$ -(L\psi)\omega = - (\pp_r^2 \psi +(\pp_r
\psi) Lr)\omega \le \left(|\pp_r \psi|^2 + |\pp_r \psi|\frac dr
+\sqrt{Kd}\,|\pp_r \psi|\right)\omega.$$
The remainder of the proof is the same as
in the proof of \cite[Theorem 1.1]{SZ},
using $L\psi$ in place of $\DD\psi$.
\end{proof}

\end{document}